\newcommand\myparagraph{\@startsection{paragraph}{4}{\z@}%
	{3.25ex \@plus1ex \@minus.2ex}%
	{-1em}%
	{\normalfont\normalsize\bfseries}}
\DeclareMathAlphabet{\mymathcal}{OMS}{cmsy}{m}{n}
\newcommand{\bigtilde}[1]{\ensuremath{\widetilde{\mbox{#1}}}‌​}
\newcommand{\SVRF}{\bigtilde{SVRF}\xspace}
\newcommand{\SSVRF}{\bigtilde{SSVRF}\xspace}
\newlength{\intextsepsave}
\definecolor{purple}{rgb}{0.3,0.0,.4}
\newcommand{\beas}{\begin{eqnarray*}}
\newcommand{\eeas}{\end{eqnarray*}}
\newcommand{\bea}{\begin{eqnarray}}
\newcommand{\eea}{\end{eqnarray}}
\newcommand{\beq}{\begin{equation}}
\newcommand{\eeq}{\end{equation}}
\newcommand{\bit}{\begin{itemize}}
\newcommand{\eit}{\end{itemize}}
\newcommand{\ben}{\begin{enumerate}}
\newcommand{\een}{\end{enumerate}}
\newcommand{\ba}{\begin{array}}
\newcommand{\ea}{\end{array}}
\newcommand{\bbm}{\begin{bmatrix}}
\newcommand{\ebm}{\end{bmatrix}}
\newcommand{\eg}{{\it e.g.}}
\newcommand{\ie}{{\it i.e.}}
\newcommand{\reals}{{\mbox{\bf R}}}
\newcommand{\sym}{{\mbox{\bf S}}}  
\newcommand{\rank}{\mbox{\textrm{Rank}}}
\newcommand{\tr}{\mathop{\bf tr}}
\newcommand{\Expect}{\mathop{\bf E{}}}
\newcommand{\Prob}{\mathop{\bf Prob}}
\newcommand{\sign}{\mathop{\bf sign}}
\newcommand{\dist}{\mathop{\bf dist{}}}
\newcommand{\var}{\mathop{\bf var}}
\newcommand{\conv}{\mathop{\bf conv}}
\newcommand{\argmax}{\mathop{\rm argmax}}
\newcommand{\indicator}{{\bf 1}}
\newlength \figwidth
\newcommand{\amscode}[1]{\par\addvspace\baselineskip
	\noindent{\bf AMS subject classifications.}\enspace\ignorespaces#1}
\begin{document}

\title*{Frank-Wolfe Style Algorithms for Large Scale Optimization}
\author{Lijun Ding and Madeleine Udell}
\institute{Lijun Ding \at Operations Research and Information Engineering, Cornell University \email{ld446@cornell.edu}
\and Madeleine Udell \at Operations Research and Information Engineering, Cornell University \email{udell@cornell.edu}}
%
%
\maketitle

\abstract*{We introduce a few variants on
	Frank-Wolfe style algorithms suitable for large scale optimization.
	We show how to modify the standard Frank-Wolfe algorithm
	using stochastic gradients, approximate subproblem solutions, and sketched
	decision variables in order to scale to enormous problems while
	preserving (up to constants) the optimal convergence rate $\mymathcal{O}(\frac{1}{k})$.}

\abstract{We introduce a few variants on
	Frank-Wolfe style algorithms suitable for large scale optimization.
	We show how to modify the standard Frank-Wolfe algorithm
	using stochastic gradients, approximate subproblem solutions, and sketched
	decision variables in order to scale to enormous problems while
	preserving (up to constants) the optimal convergence rate $\mymathcal{O}(\frac{1}{k})$.}
\keywords{Large scale optimization, Frank-Wolfe algorithm, stochastic gradient,
	low memory optimization, matrix completion.}
\amscode{90C06, 90C25.}
\section{Introduction}
\label{sec:1}
This chapter describes variants on
Frank-Wolfe style algorithms suitable for large scale optimization.
Frank-Wolfe style algorithms enforce constraints by solving a linear optimization
problem over the constraint set at each iteration,
while competing approaches, such as projected or proximal gradient algorithms,
generally require projection onto the constraint set.
For important classes of constraints, such as the unit norm ball
of the $\ell_1$ or nuclear norm, linear optimization over the constraint set
is much faster than projection onto the set.
This paper provides a gentle introduction to three ideas that can be used to
further improve the performance of Frank-Wolfe style algorithms for large scale optimization:
stochastic gradients, approximate subproblem solutions, and sketched
decision variables. Using these ideas, we show how to modify the standard Frank-Wolfe algorithm
in order to scale to enormous problems while
preserving (up to constants) the optimal convergence rate.

To understand the challenges of huge scale optimization,
let us start by recalling the original Frank-Wolfe algorithm.
The Frank-Wolfe algorithm is designed to solve problems of the form
\beq \label{pr1}
\ba{ll}
\mbox{minimize} & f(x)\\
\mbox{subject to} &  x\in \Omega,\\
\ea
\eeq
where $f$ is a real valued convex differentiable function from $\reals^n$ to $\reals$, and the set $\Omega$ is a nonempty compact convex set in $\reals^n$.
Throughout the sequel, we let $x^\star \in \arg \min_{x\in \Omega} f(x)$ be an arbitrary solution to \eqref{pr1}.

The Frank-Wolfe algorithm is presented as Algorithm~\ref{fw} below.
At each iteration, it computes the gradient of the objective ${\nabla}f(x)$ at the current iterate $x$,
and finds a feasible point $v \in \Omega$ which maximizes ${\nabla}f(x)^Tv$.
The new iterate is taken to be a convex combination of the previous iterate and the point $v$.
\begin{algorithm}
	\caption{Frank-Wolfe Algorithm \label{fw}}
	\begin{algorithmic}[1]
		\STATE {\bf Input:} Objective function $f$ and feasible region $\Omega$
		\STATE{\bf Input:} A feasible starting point $x_{-1}\in \Omega$
		\STATE{\bf Input:} Stepsize sequence $\gamma_k$ and tolerance level $\varepsilon>0$
		\FOR{$k=0,1,2, \ldots$}
		\STATE Compute $ v_k = \arg \min_{v\in \Omega} {\nabla}f(x_{k-1})^Tv.$ \label{l5}
		\IF{$(x_{k-1}-v_k)^T\nabla f(x_{k-1}) \leq \varepsilon $} \label{lstop}
		\STATE break
		\ENDIF
		\STATE Update $x_k = (1-\gamma_k)x_{k-1}+\gamma_k v_k$.

		\ENDFOR
		\STATE {\bf Output:} The last iteration result $x$
	\end{algorithmic}
\end{algorithm}

The Frank-Wolfe algorithm can be used for optimization with matrix variables as well.
With some abuse of notation, when $x,\nabla f(x)$, and $v$ are matrices rather than vectors,
we use the inner product $\nabla f(x)^Tv$ to denote the matrix trace inner product $\tr(\nabla f(x)^Tv)$.

\myparagraph{Linear Optimization Subproblem.} The main bottleneck in implementing Frank-Wolfe
is solving the linear optimization subproblem in Line \ref{l5} above:
\beq
\ba{ll} \label{sp1}
\mbox{minimize} & \nabla f(x_{k-1})^Tv\\
\mbox{subject to} &  v\in \Omega.\\
\ea
\eeq
Note that the objective of the subproblem \eqref{sp1} is linear even though the constraint set $\Omega$ may not be.  Since $\Omega$ is compact, the solution to subproblem \eqref{sp1} always exists.  Subproblem \eqref{sp1} can easily be solved when the feasible region has atomic structure \cite{atm}. We give three examples here.
\begin{itemize}
	\item \emph{The feasible region is a one norm ball.} For some $\alpha>0$,  $$\Omega = \{x\in \reals^n \mid\|x\|_1\leq \alpha\}.$$ Let $\{e_i\}_{i=1}^n$ to be the standard basis in $\reals ^n$,
	$S = \argmax_{i} |\nabla f(x_{k-1})^Te_i|$ and $s_i = \sign (\nabla f(x_{k-1})^Te_i)$.
	The solution $v$ to subproblem \eqref{sp1} is any vector in the convex hull of
	$\{-\alpha s_ie_i\mid i\in S \}$:
	$$ v \in \conv(\{-\alpha s_ie_i\mid i\in S \}).$$
	In practice, we generally choose $v = -\alpha s_ie_i$ for some $i\in S$.
	\item \emph{The feasible region is a nuclear norm ball.} For some $\alpha>0$,
	$$\Omega = \{X\in \reals^{m\times n}\mid \|X\|_* \leq \alpha \},$$
	where $\| \cdot\|_*$ is the nuclear norm, \ie, the sum of the singular values.
	Here $v$, $x_{k-1}$, and $\nabla f(x_{k-1})$ are matrices in $\reals ^{m\times n}$,
	and we recall that the objective in Problem \eqref{sp1}, $v^T\nabla f(x_{k-1})$,
	should be understood as the matrix trace inner product $\tr(\nabla f(x_{k-1})^Tv)$.
	Subproblem \eqref{sp1} in this case is
	\beq
	\ba{ll}
	\mbox{minimize} & \tr(\nabla f(x_{k-1})^Tv)\\
	\mbox{subject to} &  \|v\|_*\leq \alpha.\\
	\ea
	\eeq
	Denote the singular values of $\nabla f(x_{k-1})$ as
	$\sigma_1\geq \dots, \geq \sigma_{\min(m,n)}$ and the corresponding singular vectors
	as $(u_1,v_1), \dots, (u_{\min(m,n)},v_{\min(m,n)})$.
	Let $S = \{i \mid \sigma_i = \sigma_1\}$ be the set of indices with maximal singular value.
	Then the solution to problem \eqref{sp1} is the convex hull of the singular vectors with
	maximal singular value, appropriately scaled:
	\[
	\conv(\{ -\alpha u_iv_i^T\mid i \in S \}).
	\]
	In practice, we often take the solution $-\alpha u_1^Tv_1$.
	This solution is easy to compute compared to the full singular value decomposition.
	Specifically, suppose $\nabla f(x_{k-1})$ is sparse,
	and let $s$ be the number of non-zero entries in $\nabla f(x_{k-1})$.
	For any tolerance level $\epsilon>0$,
	the number of arithmetic operations required to compute the top singular tuple $(u_1,v_1)$
	using the Lanczos algorithm such that $u_1^T\nabla f(x_{k-1}) v_1 \geq \sigma_1 -\epsilon$
	is at most $\mymathcal{O}(s\frac{\log(m+n)\sqrt{\sigma_1}}{\sqrt{\epsilon}})$ with high probability \cite{KW}.

	\item \emph{The feasible region is a restriction of a nuclear norm ball.}
	For some $\alpha>0$,
	\[
	\Omega = \{X \in \reals^{n\times n}\mid \|X\|_* \leq \alpha,~ X\succeq 0\},
	\]
	where $X\succeq 0$ means $X$ is symmetric and positive semidefinite, \ie, every eigenvalue of $X$ is nonnegative.
	In this case the objective in problem \eqref{sp1} $v^T\nabla f(x_{k-1})$ should
	be understood as $\tr(\nabla f(x_{k-1})^Tv)$, where $v$, $x_{k-1}$, and $\nabla f(x_{k-1})$
	are matrices in $\sym ^{n}$.
	The subproblem \eqref{sp1} in this case is just
	\beq
	\ba{ll}
	\mbox{minimize} & \tr(\nabla f(x_{k-1})^Tv)\\
	\mbox{subject to} &  \|v\|_*\leq \alpha \\
	                  & v\succeq 0.\\
	\ea
	\eeq
	Denote the eigenvalues of $\nabla f(x_{k-1})$ as $\lambda_1\geq \dots, \geq \lambda_{n}$
	and the corresponding eigenvectors as $v_1, \dots, v_{n}$.
	Let $S = \{i \mid \lambda_i = \lambda_n\}$ be the set of indices with smallest eigenvalue.
	Then the solution to Problem \eqref{sp1} is simply $0$ if $\lambda_{n}\geq 0$,
	while if $\lambda_{n}\leq 0$, the solution set consists of
	the convex hull of the eigenvectors with smallest eigenvalue, appropriately scaled:
	\[
	\conv(\{ \alpha v_iv_i^T\mid i \in S \}).
	\]

	In practice, we generally take $\alpha v_nv_n^T$ as a solution (if $\lambda_n\leq 0$).
	As in the previous case, this solution is easy to compute compared to the full eigenvalue decomposition.
	Specifically, suppose $\nabla f(x_{k-1})$ is sparse,
	and let $s$ be the number of non-zero entries in $\nabla f(x_{k-1})$.
	For any tolerance level $\epsilon>0$,
	the number of arithmetic operations required to compute the eigenvector $v_n$
	using the Lanczos algorithm such that $v_1^T \nabla f(x_{k-1}) v_1 \leq \lambda_{n} +\epsilon$
	is at most $\mymathcal{O}(s\frac{\log(2n)\sqrt{\max(|\lambda_1|,|\lambda_n|)}}{\sqrt{\epsilon}})$ with high probability \cite[Lemma 2]{hazan2}.
\end{itemize}

Thus, after $k$ iterations of the Frank-Wolfe algorithm,
the sparsity or rank of the iterate $x_k$ in the above three examples is bounded by $k$.
This property has been noted and exploited by many authors \cite{rob, hazan2, Jaggi}.

The stopping criterion in Line \ref{lstop}  of Algorithm \ref{fw} bounds the suboptimality $f(x_{k-1}) - f(x^\star)$, where $x^\star \in \arg \min _{x\in \Omega } f(x)$. Indeed,
\begin{align*}
f(x_{k-1}) - f(x^\star) & \leq (x_{k-1} - x^\star) ^T\nabla f(x_{k-1}) \\
& \leq (x_{k-1} - v_k)^T\nabla f(x_{k-1}),
\end{align*}
where the first inequality is due to convexity and the second line is due to optimality of $v_k$.

\myparagraph{Matrix Completion.}
To illustrate our previous points, let's consider the example of matrix completion.
Keep this example in mind: we will return to this problem again in the coming sections
to illustrate our methods.

We consider the optimization problem
\beq
\ba{ll}\label{op3}
\mbox{minimize} & f(\mymathcal{A}X)\\
\mbox{subject to} &  \|X\|_*\leq \alpha,\\
& X\in \mymathcal{S}\\
\ea
\eeq
with variable $X \in \reals^{m\times n}$.
Here $\mymathcal{A}:\reals^{m\times n} \rightarrow \reals^d$ is a linear map and $\alpha>0$ is a positive constant. The set $\mymathcal{S}$ represents some additional information of the underlying problem. In this book chapter, the set $\mymathcal{S}$ will be either $\reals^{m \times n}$ or $\{X \in \reals^{n\times n}\mid  X\succeq 0\}$. In the first case, the feasible region of  Problem \eqref{op3} is just the nuclear norm ball. In the second case,  the feasible region is a restriction of the nuclear norm. In either case, the linear optimization subproblem can be solved efficiently as we just mentioned. The function $f:\reals^d \rightarrow \reals$ is a loss function that penalizes the misfit
between the predictions $\mymathcal{A}X$ of our model and our observations from the matrix.

For example, suppose we observe matrix entries $c_{ij}$ with indices in
$\mymathcal{O} \subset \{1,\dots, m\} \times \{1,\dots,n\}$ from a matrix $X^0\in \reals^{m\times n}$ corrupted by Gaussian noise $E$:
$$
c_{ij} =(X^{0})_{ij} + E_{ij}, \qquad E_{ij} \stackrel{iid}{\sim} N(0,\sigma^2)
$$
for some $\sigma>0$.
A maximum likelihood formulation of problem \eqref{op3} to recover $X^0$ would be
\beq
\ba{ll}\label{op4}
\mbox{minimize} & \sum_{(i,j)\in \mymathcal{O}}(x_{ij}-c_{ij})^2\\
\mbox{subject to} &  \|X\|_*\leq \alpha.\\
\ea
\eeq
To rewrite this problem in the form of \eqref{op3},
we choose $\mymathcal{A}$ so that $(\mymathcal{A}X)_{ij} = x_{ij}$ for $(i,j) \in \mymathcal{O}$, so
the number of observations $d$ is the cardinality of $\mymathcal{O}$. Since there is no additional information of $X^0$, we set $\mymathcal{S}=\reals^{n\times m}$.
The objective $f$ is  a sum of quadratic losses in this case.

Since the constraint region $\Omega$ is a nuclear norm ball when $\mymathcal{S} = \reals^{m\times n}$, we can apply Frank-Wolfe to this optimization problem.
The resulting algorithm is shown as Algorithm~\ref{fw1}.
Here Line \ref{l6} computes the singular vectors with largest singular value, and
Line \ref{l7} exploits the fact that that at each iteration we can choose a rank one update.
\begin{algorithm}
	\caption{Frank-Wolfe Algorithm Applied to Matrix Completion with nuclear ball constraint only\label{fw1}}
	\begin{algorithmic}[1]
		\STATE {\bf Input:} Objective function $f$ and $\alpha >0$
		\STATE{\bf Input:} A feasible starting point $\|X_{-1}\| \leq \alpha$
		\STATE{\bf Input:} Stepsize sequence $\gamma_k$
		\FOR{$k=0,1,2, \ldots,K$}
		\STATE Compute $ (u_k,v_k) $, the top singular vectors of ${\nabla}f(\mymathcal {A}X_k)$. \label{l6}
		\IF{$\tr((X_{k-1}+\alpha u_kv_k^T)^T\nabla f(X_{k-1}))\leq \varepsilon $}
		\STATE break the for loop.

		\ENDIF
		\STATE Update $X_k = (1-\gamma_k)X_{k-1}-\gamma_k \alpha u_kv_k^T$.  \label{l7}

		\ENDFOR
		\STATE {\bf Output:} The last iteration result $X_K$
	\end{algorithmic}
\end{algorithm}

However, there are three main challenges in the large scale setting
that can pose difficulties in applying the Frank-Wolfe algorithm:
\begin{enumerate}
	\item Solving the linear optimization subproblem \eqref{sp1} exactly,
	\item computing the gradient $\nabla f$, and
	\item storing the decision variable $x$.
\end{enumerate}

To understand why each of these steps might present a difficulty, consider again the matrix completion case with nuclear ball constraint only.
\begin{enumerate}
	\item Due to Galois theory, it is not possible to exactly compute the top singular vector, even in exact arithmetic.
	Instead, we rely on iterative methods such as the QR algorithm with shifts, or the Lanczos method,
	which terminate with some approximation error.
	What error can we allow in an approximate solution of the linear optimization subproblem \eqref{sp1}?
	How will this error affect the performance of the Frank-Wolfe algorithm?
	\item In many machine learning and statistics problems,
	the objective $f(X) = \sum_{i=1}^d f_i(X)$ is a sum over $d$ observations,
	and each $f_i$ measures the error in observation $i$.
	As we collect more data, computing $\nabla f$ exactly becomes more difficult,
	but approximating $\nabla f$ is generally easy.
	Can we use an approximate version of $\nabla f$ instead of the exact gradient?
	\item Storing $X$, which requires $m \times n$ space in general, can be costly if $n$ and $m$ are large.
	One way to avoid using $\mymathcal{O}(mn)$ memory is to store each updates $(u_k,v_k)$.
	But this approach still uses $\mymathcal O(mn)$ memory when the number of iterations $K \geq \min(m,n)$.
	Can we exploit structure in the solution $X^\star$ to reduce the memory requirements?
\end{enumerate}
We provide a crucial missing piece to address the first challenge, and
a gentle introduction to the ideas needed to tackle the second and third challenges.
Specifically, we will show the following.
\begin{enumerate}
	\item Frank-Wolfe type algorithms still converge when we use an approximate oracle
	to solve the linear optimization subproblem \eqref{sp1}.
	In fact, the convergence rate is preserved up to a multiplicative user-specified constant.
	\item Frank-Wolfe type algorithms still converge when the gradient is replaced by an approximate gradient,
	and the convergence rate is preserved in expectation.
	\item Frank-Wolfe type algorithms are amenable to a matrix sketching procedure which
	can be used to reduce memory requirements, and the convergence rate is not affected.
\end{enumerate}

Based on these ideas, we propose two new Frank-Wolfe Style algorithms which we call
SVRF with approximate oracle (\SVRF, pronounced as ``tilde SVRF''),
and Sketched \SVRF (\SSVRF).
They can easily scale to extremely large problems.

The rest of this chapter describes how \SSVRF
addresses the three challenges listed above.
To address the first challenge, we augment the Frank-Wolfe algorithm
with an approximate oracle for the linear optimization subproblem, and prove a
convergence rate in this setting. Numerical experiments confirm that using
an approximate oracle reduces the time necessary to achieve a given error tolerance.
To address the second challenge,
we then present a Stochastic Variance Reduced Frank-Wolfe (SVRF) algorithm with approximate oracle, \SVRF.
Finally, we show how to use
the matrix sketching procedure of \cite{sketch}
to reduce the memory requirements of the algorithm.
We call the resulting algorithm \SSVRF.

\myparagraph{Notation.}
We use $\|\cdot\|$ to denote the Euclidean norm when the norm is applied to a vector,
and to denote the operator norm (maximum singular value) when applied to a matrix.
We use $\|\cdot\|_F$ to denote the Frobenius norm and $\|\cdot\|_*$ to denote the nuclear norm
(sum of singular values).
The transpose of a matrix $A$ and a vector $v$ is denoted as $A^T$ and $v^T$. The trace of a  matrix $A\in \reals^{n\times n}$ is the sum of all its diagonals, \ie,
$\tr(A) =\sum_{i=1}^n A_{ii}$.
The set of symmetric matrices in $\reals^{n\times n}$ is denoted as $\sym^n$.
We use $X\succeq 0$ to mean that $X$ is symmetric and positive semidefinite (psd).
A convex function $f:\reals^n \rightarrow \reals$ is $L$-smooth if $\|\nabla f(x)- \nabla f(y)\| \leq L\| x-y\|$
for some finite $L \geq 0$.
The diameter $D$ of a set $\Omega \subset \reals ^n$ is defined as $D = \sup_{x,y\in \Omega} \|x-y\|$.
For an arbitrary matrix $Z$, we define $[Z]_r$ to be the best rank $r$ approximation of $Z$ in Frobenius norm. For a linear operator $\mymathcal{A}:\reals^{m\times n} \rightarrow \reals^{l}$, where $\reals^{m\times n}$ and $\reals^{n}$ are equipped with the trace inner product and the Euclidean inner product, the adjoint of $\mymathcal{A}$ is denotes as $\mymathcal{A}^*:\reals^{l} \rightarrow \reals^{m\times n}$.

\section{Frank-Wolfe with Approximate Oracle}\label{s2}
In this section, we address the first challenge:
the linear optimization subproblem \eqref{sp1} can only be solved approximately.
Most of the ideas in this section are drawn from \cite{Jaggi};
we include this introduction for the sake of completeness.

We will show that the Frank-Wolfe algorithm with approximate subproblem oracle
converges at the same rate as the one with exact subproblem oracle
up to a user-specified multiplicative constant.
\subsection{Algorithm and convergence}
As before, we seek to solve Problem \eqref{pr1},
\begin{equation*}
\ba{ll}
\mbox{minimize} & f(x)\\
\mbox{subject to} &  x\in \Omega.\\
\ea
\end{equation*}

Let us introduce Algorithm \ref{fwap}, which we call Frank-Wolfe with approximate oracle.
The only difference from the original Frank-Wolfe algorithm is the tolerance $\epsilon_k>0$:
in Line \ref{l5ap}, we compute an approximate solution with tolerance $\epsilon_{k}$
rather than an exact solution.
\begin{algorithm}
	\caption{Frank-Wolfe with approximate oracle \label{fwap}}
	\begin{algorithmic}[1]
		\STATE {\bf Input:} Objective function $f$ and feasible region $\Omega$
		\STATE{\bf Input:} A feasible starting point $x_{-1}\in \Omega$
		\STATE{\bf Input:} Stepsize sequence $\gamma_k$ and tolerance level $\varepsilon$ and error sequence $\epsilon_k>0$
		\FOR{$k=0,1,2, \ldots$}
		\STATE Compute $ v_k$ such that $\nabla f(x_{k-1})^T v_k\leq  \min_{v\in \Omega} {\nabla}f(x_{k-1})^Tv+\epsilon_{k}.$ \label{l5ap}
		\IF{$(x_{k-1}-v_k)^T\nabla f(x_{k-1}) \leq \varepsilon $}
		\STATE break
		\ENDIF
		\STATE Update $x_k = (1-\gamma_k)x_{k-1}+\gamma_k v_k$. \label{upap}

		\ENDFOR
		\STATE {\bf Output:} The last iteration result $x_k$
	\end{algorithmic}
\end{algorithm}

There are a few variants on this algorithm that use different line search methods.
The next iterate might be the point on the line determined by $x_{k-1}$ and $v_k$ with lowest objective value,
or the point with best objective value on the polytope with vertices $x_{-1},v_1,\dots,v_k$.
These variants may reduce the total number of iterations at the cost of an
increased per-iteration complexity.
When memory is plentiful and line search or polytope search is easy to implement,
these techniques can be employed; otherwise, a predetermined stepsize rule, \ie, $\gamma_k$ is determined as an input, \eg,$\gamma_k = \frac{2}{k+2}$ or $\gamma_k$ is a constant, might be preferred.
All these techniques enjoy the same complexity bounds as Algorithm \ref{fwap} since within an iteration,
starting from the same iterate $x_k$,
the objective is guaranteed to decrease at least as much
under each of these line search rules
as using the predetermined stepsize rule in Algorithm \ref{fwap}.

The following theorem gives a guarantee on the primal convergence of the objective value when $f$ is $L$-smooth.
\begin{theorem}\label{t0}
	Given an arbitrary $\delta>0$, if $f$ is $L$-smooth, $\Omega$ has diameter $D$, $\gamma_k = \frac{2}{k+2}$ and
	$\epsilon_{k}=\frac{LD^2}{2}\gamma_k\delta$,
	then the iterates $x_k$ of Algorithm \ref{fwap} 
	satisfy
	\begin{align}
	f(x_k) - f(x^\star) \leq \frac{2LD^2}{k+2}(1+\delta) \label{tt1}
	\end{align}
	where $x^\star \in \arg\min_{x\in \Omega } f(x)$.
\end{theorem}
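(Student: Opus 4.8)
The plan is to run the standard Frank–Wolfe descent-lemma argument, but carefully tracking the extra term contributed by the approximate oracle, and then close the recursion by induction on $k$. First I would invoke $L$-smoothness at the update $x_k = (1-\gamma_k)x_{k-1} + \gamma_k v_k$, i.e. $x_k - x_{k-1} = \gamma_k(v_k - x_{k-1})$, to obtain
\begin{align*}
f(x_k) &\leq f(x_{k-1}) + \gamma_k \nabla f(x_{k-1})^T(v_k - x_{k-1}) + \frac{L}{2}\gamma_k^2\|v_k - x_{k-1}\|^2 \\
&\leq f(x_{k-1}) + \gamma_k \nabla f(x_{k-1})^T(v_k - x_{k-1}) + \frac{LD^2}{2}\gamma_k^2,
\end{align*}
using $\|v_k - x_{k-1}\| \leq D$ since both points lie in $\Omega$. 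Next I would bound the linear term using the approximate-oracle guarantee in Line~\ref{l5ap}: since $\nabla f(x_{k-1})^T v_k \leq \nabla f(x_{k-1})^T x^\star + \epsilon_k$ (because $x^\star \in \Omega$ is a feasible competitor in the subproblem), we get $\nabla f(x_{k-1})^T(v_k - x_{k-1}) \leq \nabla f(x_{k-1})^T(x^\star - x_{k-1}) + \epsilon_k \leq f(x^\star) - f(x_{k-1}) + \epsilon_k$, where the last step is convexity of $f$. Substituting $\epsilon_k = \frac{LD^2}{2}\gamma_k\delta$ and writing $h_k := f(x_k) - f(x^\star)$, this yields the one-step recursion
\begin{align*}
h_k \leq (1-\gamma_k)h_{k-1} + \frac{LD^2}{2}\gamma_k^2(1+\delta).
\end{align*}

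With $\gamma_k = \frac{2}{k+2}$, I would then prove $h_k \leq \frac{2LD^2(1+\delta)}{k+2}$ by induction on $k$. For the base case $k=0$: $\gamma_0 = 1$, so $h_0 \leq \frac{LD^2}{2}(1+\delta) \leq LD^2(1+\delta) = \frac{2LD^2(1+\delta)}{0+2}$. For the inductive step, set $C := 2LD^2(1+\delta)$ and assume $h_{k-1} \leq \frac{C}{k+1}$. Then the recursion gives
\begin{align*}
h_k \leq \left(1 - \frac{2}{k+2}\right)\frac{C}{k+1} + \frac{C}{4}\cdot\frac{4}{(k+2)^2} = \frac{k}{k+2}\cdot\frac{C}{k+1} + \frac{C}{(k+2)^2},
\end{align*}
and the routine algebraic check $\frac{k}{(k+2)(k+1)} + \frac{1}{(k+2)^2} \leq \frac{1}{k+2}$ — equivalently $\frac{k}{k+1} + \frac{1}{k+2} \leq 1$, i.e. $\frac{k}{k+1} \leq \frac{k+1}{k+2}$, which holds since $k(k+2) = k^2+2k \leq k^2+2k+1 = (k+1)^2$ — completes the step. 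This gives exactly \eqref{tt1}.

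The main subtlety, rather than obstacle, is making sure the approximate-oracle error is inserted at the right place and with the right sign: the definition of $v_k$ controls $\nabla f(x_{k-1})^T v_k$ relative to $\min_{v\in\Omega}\nabla f(x_{k-1})^T v$, which is $\leq \nabla f(x_{k-1})^T x^\star$, so the $+\epsilon_k$ propagates cleanly into the descent inequality and, crucially, the choice $\epsilon_k \propto \gamma_k$ (not $\gamma_k^2$) still only perturbs the $\gamma_k^2$ term in the recursion, because $\gamma_k\epsilon_k = \frac{LD^2}{2}\gamma_k^2\delta$. One should also note that if the algorithm breaks early via the stopping criterion, the bound holds trivially (or rather, the theorem is about the iterates generated, and the early-stopping case is a separate guarantee already discussed before the theorem); I would add a sentence to that effect. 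Everything else is the verification of the two elementary inequalities above, which I would state but not belabor.
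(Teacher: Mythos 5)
Your proof is correct and follows essentially the same route as the paper's: the $L$-smoothness descent lemma, the diameter bound, the approximate-oracle inequality with $x^\star$ as a feasible competitor, convexity, and then the same induction on the recursion $h_k \leq (1-\gamma_k)h_{k-1} + \tfrac{LD^2}{2}\gamma_k^2(1+\delta)$. The only cosmetic difference is that the paper first compares $v_k$ to the exact minimizer $v_k^\star$ and then $v_k^\star$ to $x^\star$, whereas you fold these into one step by comparing $v_k$ directly to $x^\star$; the content is identical.
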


To start, recall an equivalent definition of $L$-smoothness \cite[Theorem 2.1.5]{nesterov2013introductory}. For completeness,
we provide a short proof in the appendix.
\begin{proposition}\label{p0}
	If the real valued differentiable convex function $f$ with domain $\reals^n$ is
	$L$-smooth, 
	then for all $x,y \in \reals^n$,
	\[
	f(x)\leq f(y) + \nabla f(y)^T (x-y)+\frac{L}{2}\|x-y\|^2.
	\]
\end{proposition}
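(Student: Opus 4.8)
The plan is to prove the inequality directly from the definition of $L$-smoothness via the fundamental theorem of calculus along the segment joining $y$ to $x$; convexity is not actually needed for this direction, though it is harmless to assume it. First I would fix $x,y\in\reals^n$ and introduce the auxiliary scalar function $g(t)=f(y+t(x-y))$ for $t\in[0,1]$. Since $f$ is differentiable with Lipschitz (hence continuous) gradient, $g$ is continuously differentiable with $g'(t)=\nabla f(y+t(x-y))^T(x-y)$, so the fundamental theorem of calculus gives $f(x)-f(y)=g(1)-g(0)=\int_0^1 \nabla f(y+t(x-y))^T(x-y)\,dt$.

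Next I would subtract off the linear term. Writing $\nabla f(y)^T(x-y)=\int_0^1 \nabla f(y)^T(x-y)\,dt$, we obtain
\begin{align*}
f(x)-f(y)-\nabla f(y)^T(x-y)=\int_0^1\bigl(\nabla f(y+t(x-y))-\nabla f(y)\bigr)^T(x-y)\,dt.
\end{align*}
Then I would bound the integrand pointwise: by the Cauchy--Schwarz inequality it is at most $\|\nabla f(y+t(x-y))-\nabla f(y)\|\,\|x-y\|$, and by $L$-smoothness applied to the two points $y+t(x-y)$ and $y$, whose difference has norm $t\|x-y\|$, this is at most $Lt\|x-y\|^2$.

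Finally I would evaluate the elementary integral $\int_0^1 Lt\|x-y\|^2\,dt=\tfrac{L}{2}\|x-y\|^2$, which yields the claim. There is no serious obstacle here; the only point needing a word of care is the justification that $g$ is absolutely continuous so that the fundamental theorem of calculus applies, which is immediate from continuity of $\nabla f$ (a consequence of the Lipschitz bound). I would also remark that the same argument in fact only uses that $\nabla f$ is $L$-Lipschitz on the segment $[y,x]$, so the global domain assumption is invoked merely to ensure that this segment lies in the domain of $f$.
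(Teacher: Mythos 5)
Your proof is correct and follows exactly the same route as the paper's: write the difference $f(x)-f(y)-\nabla f(y)^T(x-y)$ as $\int_0^1(\nabla f(y+t(x-y))-\nabla f(y))^T(x-y)\,dt$, bound the integrand via Cauchy--Schwarz and the Lipschitz condition by $Lt\|x-y\|^2$, and integrate. Your additional remarks (that convexity is not actually used, and that only Lipschitzness of $\nabla f$ along the segment matters) are correct observations the paper does not make explicit, but the core argument is identical.
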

\begin{proof}[Proof of Theorem \ref{t0}]

	Let $v^\star_k\in \arg\min_{v\in \Omega} f(x_{k-1})^Tv$ in Line \ref{l5ap}.
	Using the update equation $x_{k} = x_{k-1} +  \gamma_k(v_k- x_{k-1})$, we have
	\beq \label{prt0}
	\ba{ll}
	f(x_k) -f(x^\star) & \leq f(x_{k-1}) -f(x^\star)+ \nabla f(x_{k-1})^T(v_k-x_{k-1}) \gamma_k + \frac{L}{2} \gamma_k^2\|v_k - x_{k-1}\|^2\\
	 &\leq f(x_{k-1}) -f(x^\star)+ \nabla f(x_{k-1})^T(v_k-x_{k-1})\gamma_k + \frac{LD^2}{2}\gamma_k^2\\
	 &\leq f(x_{k-1}) -f(x^\star) + \nabla f(x_{k-1})^T(v^\star_k - x_{k-1}) \gamma_k + \frac{LD^2}{2} \gamma_k^2(1+\delta)\\
	& \leq  f(x_{k-1}) -f(x^\star) +\nabla f(x_{k-1})^T(x^\star -x_{k-1}) \gamma_k + \frac{LD^2}{2} \gamma_k^2(1+\delta)\\
	 &\leq  (1-\gamma_k)( f(x_{k-1})-f(x^\star)) +\frac{LD^2}{2}\gamma_k^2(1+\delta).\\
	\ea
	\eeq
The first inequality is due to Proposition \ref{p0}. The second inequality uses the diameter $D$ of $\Omega$ and the fact that $x_k$ is feasible since $\gamma_k\in (0,1)$ and $x_k$ is a convex combination of points in the convex set $\Omega$. The third inequality uses the bound on the suboptimality of $v_k$ in Line \ref{l5ap}. The fourth inequality uses the optimality of $v_k^\star$  for $\min_{v\in\Omega} v^T\nabla f(x_{k-1})$ and fifth uses convexity of $f$.  The conclusion of the above chain of inequalities is
	\beq
	\ba{ll}\label{e1}
	f(x_k)-f(x^\star)\leq (1-\gamma_k)( f(x_{k-1})-f(x^\star)) +\frac{LD^2}{2}\gamma_k^2(1+\delta).
	\ea
	\eeq
	
	Now we prove inequality \eqref{tt1} by induction.
	The base case $k=0$ follows from \eqref{prt0} since $\gamma_0= 1$.
	Now suppose inequality \eqref{tt1} is true for $k\leq s$. Then for $k=s+1$,
	\beq
	\ba{ll}
	f(x_{s+1}) -f(x^\star)&
	\leq (1-\frac{2}{s+2+1})( f(x_{s})-f(x^\star)) +\frac{LD^2}{2}\big(\frac{2}{s+2+1}\big )^2(1+\delta)\\
	&=  \frac{s+1}{s+2+1} ( f(x_{s})-f(x^\star)) +\frac{LD^2}{2}\big(\frac{2}{s+2+1}\big)^2(1+\delta)\\
	&\leq   \big( \frac{s+1}{s+2+1}\frac{2}{s+2} +\frac{2}{(s+2+1)^2}\big) LD^2(1+\delta )\\
&	=  \big (\frac{2s+2}{s+2} +\frac{2}{s+2+1} \big) \frac{LD^2}{s+2+ 1}(1+\delta) \\
&	\leq  \big (\frac{2s+2+2}{s+2} \big)  \frac{LD^2}{s+2+1}(1+\delta)\\
&	=\frac{2LD^2}{s+1 + 2} (1+\delta ).
	\ea
	\eeq
	We use \eqref{e1} 
	in the first inequality and the induction hypothesis in the second inequality to bound the term $f(x_{s})-f(x^\star)$.
	The last line completes the induction.
\end{proof}
\subsection{Numerics}

In this subsection, we demonstrate that Frank-Wolfe is robust to using an approximate oracle through numerical experiments.

The specific problem we will use as our case study is the following symmetric matrix completion problem which is a special case of Problem \eqref{op3}.
The symmetric matrix completion problem seeks to recover an underlying matrix $X^0 \succeq 0$ from a few noisy entries of $X^0$. Specifically, let $C =X^0 +E$ be a matrix of noisy observations of $X^0$, where $E$ is a symmetric noise matrix. For each $i\geq j$, we observe $C_{ij}$ independently with probability $p$. The quantity $p$ is called the sample rate.

Let $\mymathcal{O}$ be the set of observed entries and $m$ be the number of entries observed.
Note that if $(i,j)\in \mymathcal{O}$, $(j,i)\in \mymathcal{O}$ as well since our matrices are all symmetric.

The optimization problem we solve to recover $X^0$ is
\beq
\ba{ll}\label{op6}
\mbox{minimize} &f(X):\,=\frac{1}{2} \|P_{\mymathcal{O}}(X)-P_{\mymathcal{O}}(C)\|_F^2\\
\mbox{subject to} &  \|X\|_*\leq \alpha, \\& X\succeq 0.\\
\ea
\eeq
Here the projection operator $P_\mymathcal{O}: \sym^n\rightarrow \reals^m$  is $$[P_{\mymathcal{O} }(Y)]_{ij} = \begin{cases}
Y_{ij} , &\text{if }  (i,j)\in \mymathcal{O} \\
0, & \text{if } (i,j)\notin \mymathcal{O}.
\end{cases}$$ for any $Y\in \sym^n$. By letting $\mymathcal{A}= P_{\mymathcal{O}}$, the set $\mymathcal{S}=\{X \in \reals^{n\times n}\mid X\succeq 0\}$ and $f(\cdot) = \|\cdot \|_F^2$, we see it is indeed a special case of Problem \eqref{op3}.

The gradient at $X_{k}$ is $\nabla f(X_k) = P_{\mymathcal{O}}(X_{k})-P_{\mymathcal{O}}(C) $. As we discussed in the introduction,  a solution to the linear optimization subproblem is
$$
V_k = \begin{cases}
\alpha v_nv_n^T,&\text{if }\lambda_{n}(\nabla f(X_{k-1})\leq 0\\
0,&\text{if }\lambda_{n}( \nabla f(X_{k-1}))>0
\end{cases}
$$
where $\lambda_{n}(\nabla f(X_{k-1}))$ is the smallest eigenvalue of $\nabla f(X_{k-1})$.

When the sample rate $p<1$ is fixed, \ie, independent of dimension $n$, the probability we observe all entries on the diagonal of $C$ is very small.
Hence the matrix $\nabla f(X_{k-1})$ is very unlikely to be positive definite, for any $k$.
(Recall that a positive definite matrix has positive diagonal.)
Let us suppose that at least one entry on the diagonal is not observed, so that $\lambda_{n}(\nabla f(X_{k-1}))\leq 0$ for every $k$.
Thus Line \ref{l5ap} of Algorithm \eqref{fwap} reduces to finding an approximate eigenvector $v$ such that

\beq
\ba{ll}\label{ap1}
\alpha v^T\nabla f(X_{k-1})v \leq  \alpha  \lambda_{n}(\nabla f(X_{k-1}))+ \epsilon_{k}.
\ea
\eeq

However, the solver ARPACK \cite{lehoucq1998arpack}, which is the default solver for iterative eigenvalue problems in a variety of languages (\eg, \texttt{eigs} in Matlab),
does not support specifying the approximation error in the form of \eqref{ap1}.
Instead, for a given tolerance $\xi_k$, it finds an approximate vector $v\in \reals^n$ with unit two norm, \ie, $\|v\|=1$, and an approximate eigenvalue $\lambda\in \reals $, such that
$$ \|\nabla f(X_{k-1})v- \lambda v\|\leq \xi_k\|\nabla f(X_{k-1})\|.$$

For simplicity, we assume that $\lambda$ returned by our eigenvalue solver is the true smallest eigenvalue $\lambda_n(\nabla f(X_{k-1}))$,
for any tolerance $\xi_k$. We will justify this assumption later through numerical experiments.
In this case, the error $\epsilon_k$ is upper bounded by
\beq
\ba{ll}\label{e2}
\xi_k \alpha  \|\nabla f(X_{k-1})\| \geq \epsilon_k.
\ea
\eeq

This upper bound turns out to be very conservative for large $\xi_k$:
$\xi_k \alpha\|\nabla f(X_{k-1})\| $ might be much larger than the actual error
$\epsilon_k = \alpha v^T\nabla f(X_{k-1})v -\alpha  \lambda_{n}(\nabla f(X_{k-1}))$,
as we will see later.

In the experiments, we set the dimension $n=1000$ and generated $X^0 = WW^T$,
where $W\in \reals^{n\times r}$ had independent standard normal distributed entries.
We then added symmetric noise $E = \frac{1}{10} \times  (L+L^T)$ to $X^0$ to get $C = X^0+E$,
where $L\in \reals^{n\times n}$ had independent standard normal entries.
We then sampled uniformly from the upper triangular part of $C$ (including the diagonal)
with probability $p=0.8$.

In each experiment we solved problem~\eqref{op6} with $\alpha = \|X^0\|_*$.
In real applications, one usually does not know $\|X^0\|_*$ in advance.
In that case, one might solve problem \eqref{op6} multiple times with different values of $\alpha$ and select the best $\alpha$ according to some criterion.

We ran $9$ experiments in total.
In each experiment, we chose a rank $r$ of $X^0$ in $\{10,50,100\}$ and
ran Frank-Wolfe with approximate oracle with constant tolerance $\xi_k \in \{10^{-15},10^{-5},1\}$
using the step size rule $\gamma_k =\frac{2}{k+2}$,
as required for Theorem \ref{t0},
and terminated each experiment after 30 seconds;
the qualitative performance of the algorithm is similar even after many more iterations.
We emphasize that within an experiment, the tolerance $\xi_k$ was the same for each iteration $k$.
See the discussion above Figure \ref{f3} for more details about the choice of $\xi_k$.

Figure \ref{f0} shows experimental results on the relationship between the relative objective $\frac{\|P_{\mymathcal{O}}(X_k)-P_{\mymathcal{O}}(C)\|_F^2}{\|P_{\mymathcal{O}}(C)\|^2_F}$ (on a log scale) and the actual clock time
under different combinations of rank $r$ and tolerance $\xi_k$.
For a fixed rank, the relative objective $\frac{\|P_{\mymathcal{O}}(X_k)-P_{\mymathcal{O}}(C)\|_F^2}{\|P_{\mymathcal{O}}(C)\|^2_F}$  evolves similarly for any tolerance.
When the underlying matrix has relatively high rank, using a lower tolerance allows faster convergence, at least for the moderate final relative objective achieved in these experiments.
The per iteration cost is summarized in Table \ref{table1}.
In fact, these plots show no advantage to using a tighter tolerance in any setting.

\captionsetup[figure]{labelfont={bf},labelformat={default},labelsep=period}
\captionsetup[table]{labelfont={bf},labelformat={default},labelsep=period}
\begin{table}[htbp]
	\vspace*{-.5cm}
	\centering
	\caption{Average per iteration time (seconds) of Algorithm \eqref{fwap} for problem \eqref{op6}.}
	\begin{tabular}{llll}
		\hline
		~&{$\rank(X^0)=10$}&{$\rank(X^0)=50$ }&{$\rank(X^0)=100$}
		\\
		\hline
		$\xi_k=10^{-15}$ & 0.1136& 0.1923& 0.2400\\
		$\xi_k=10^{-5}$&0.0997& 0.1376& 0.1840\\
		$\xi_k=1$ &  0.1017& 0.1099&0.1220\\
		\hline
	\end{tabular}\label{table1}
\end{table}

\begin{figure}
	\vspace*{-.5cm}
	\captionsetup{width= \textwidth}
	\includegraphics[width=\linewidth]{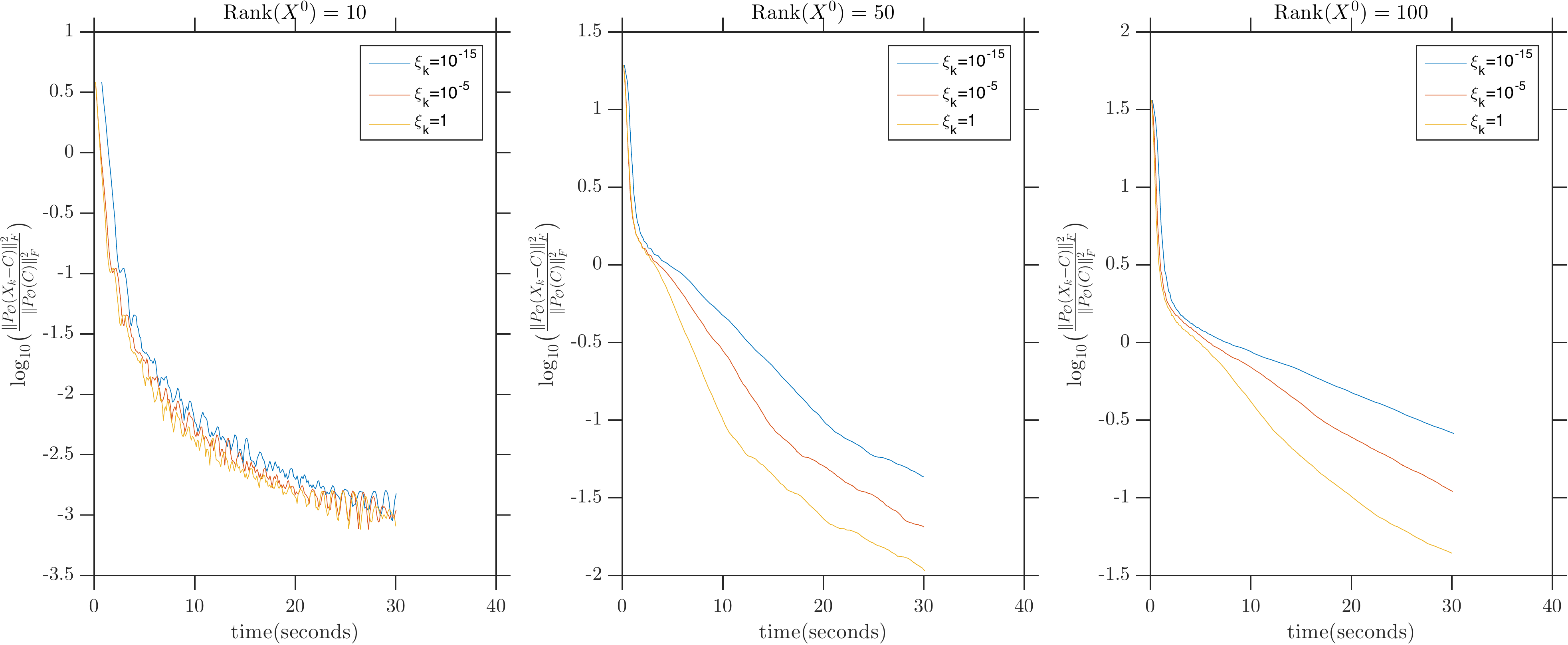}
	\caption[]{The above plots demonstrate the relation between the relative objective value $\log (\frac{\|P_{\mymathcal{O}}(X_k)-P_{\mymathcal{C}}(C)\|_F^2}{\|P_{\mymathcal{O}}(C)\|^2_F})$
		and the clock time for different combinations of rank $r = \mbox{Rank}(X^0)$ and tolerance parameters $\xi_k$.}
	\label{f0}
	\vspace*{-.5cm}
\end{figure}

One surprising feature of these graphs is the oscillation of relative error that
occurs for the model with $r=10$ once the relative error has reached $10^{-2}$ or so.
This oscillation as the algorithm approaches the optimum is due to the stepsize rule $\gamma_k = \frac{2}{k+2}$.
To see how this stepsize leads to oscillation,
suppose for simplicity that for some iterate $k_0$, $X_{k_0-1} =X^0$.
We expect this iterate to have a very low objective value; indeed, in our experiments
we found that the relative objective at $X^0$ is around $5 \times 10^{-4}$ when $r=10$.
Then in the next iteration, we add $V_{k_0}$ to $X_{k_0-1}$ with step size $\frac{2}{k_0+2}$.
Hence $X_{k_0}$ is at least $\frac{2}{k_0+2}\alpha$ away from the true solution.
This very likely will increase the relative objective since our $p$ is $0.8$.
Suppose further that $V_{k_0+1}=-V_{k_0}$. Then we almost return to $X^0$
in the next iteration and again enjoy a small relative objective.
For higher rank $X^*$, the oscillation begins at later iterations (not shown),
as the algorithm approaches the solution.

Using line search eliminates the oscillation, but increases computation time
for this problem. We do not consider linesearch further in this paper.

Our goal in this problem is not simply to find the solution of Problem \eqref{op6} but to produce a matrix $X$ close to $X^0$. Hence we also study the numerical convergence of the relative error $\|X-X^0\|_F^2/\|X^0\|_F^2$. 
Figure \ref{f1} shows experimental results on the relationship between the relative error $\frac{\|X_k-X^0\|_F^2}{\|X^0\|^2_F}$ (on a log scale) and the actual clock time
under different combinations of rank $r$ and tolerance $\xi_k$. The evolution of $\frac{\|X_k-X^0\|_F^2}{\|X^0\|^2_F}$ is very similar to the evolution of $\frac{\|P_{\mymathcal{O}}(X_k)-P_{\mymathcal{O}}(C)\|_F^2}{\|P_{\mymathcal{O}}(C)\|^2_F}$ in Figure \ref{f0}.
\begin{figure}[h]
	\vspace*{-.5cm}
	\includegraphics[width=\linewidth  ]{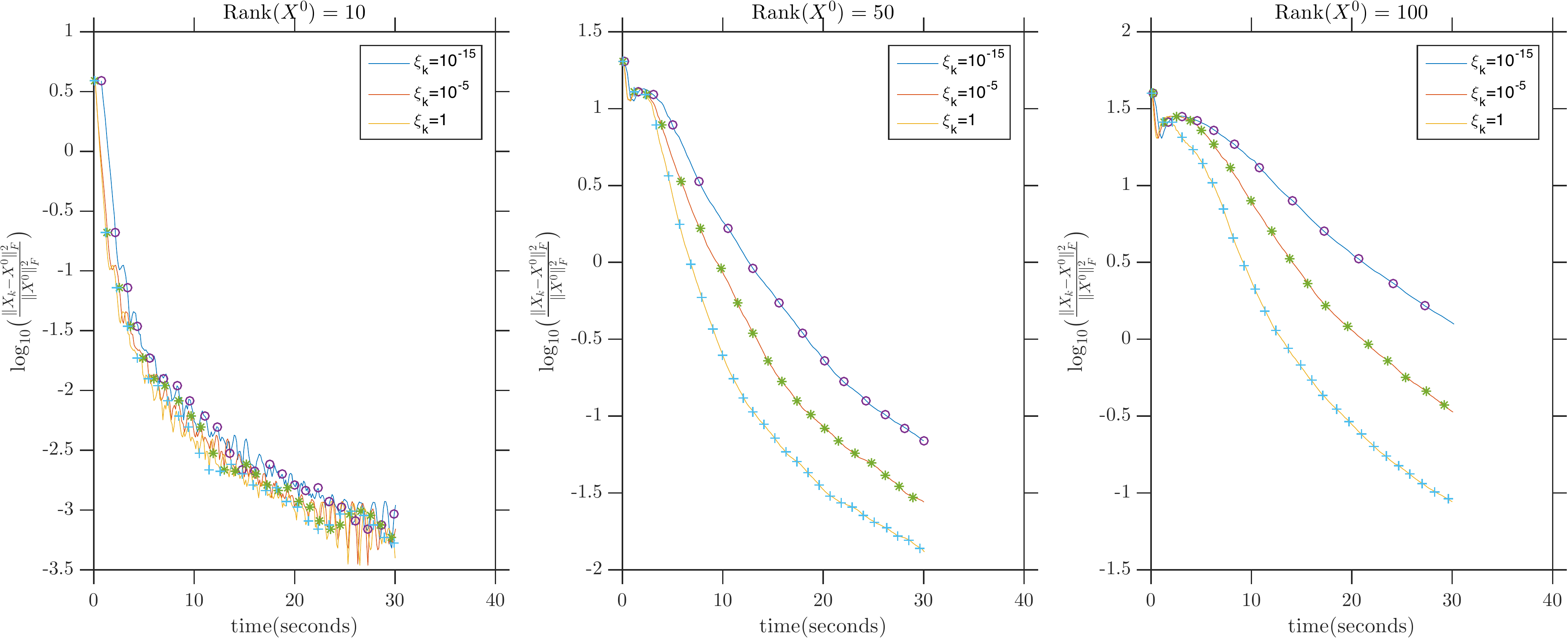}
	\caption{The above plots demonstrate the relation between the relative distance to the solution $\log (\frac{\|X_k-X^0\|_F^2}{\|X^0\|^2_F})$
		and the clock time for different combinations of rank $r = \rank(X^0)$ and tolerance parameters $\xi_k$.
		We plot a marker on the line once every ten iterations (in this figure only).}
	\label{f1}
	\vspace*{-.6cm}
\end{figure}
\begin{figure}[h]
	\includegraphics[width=\linewidth]{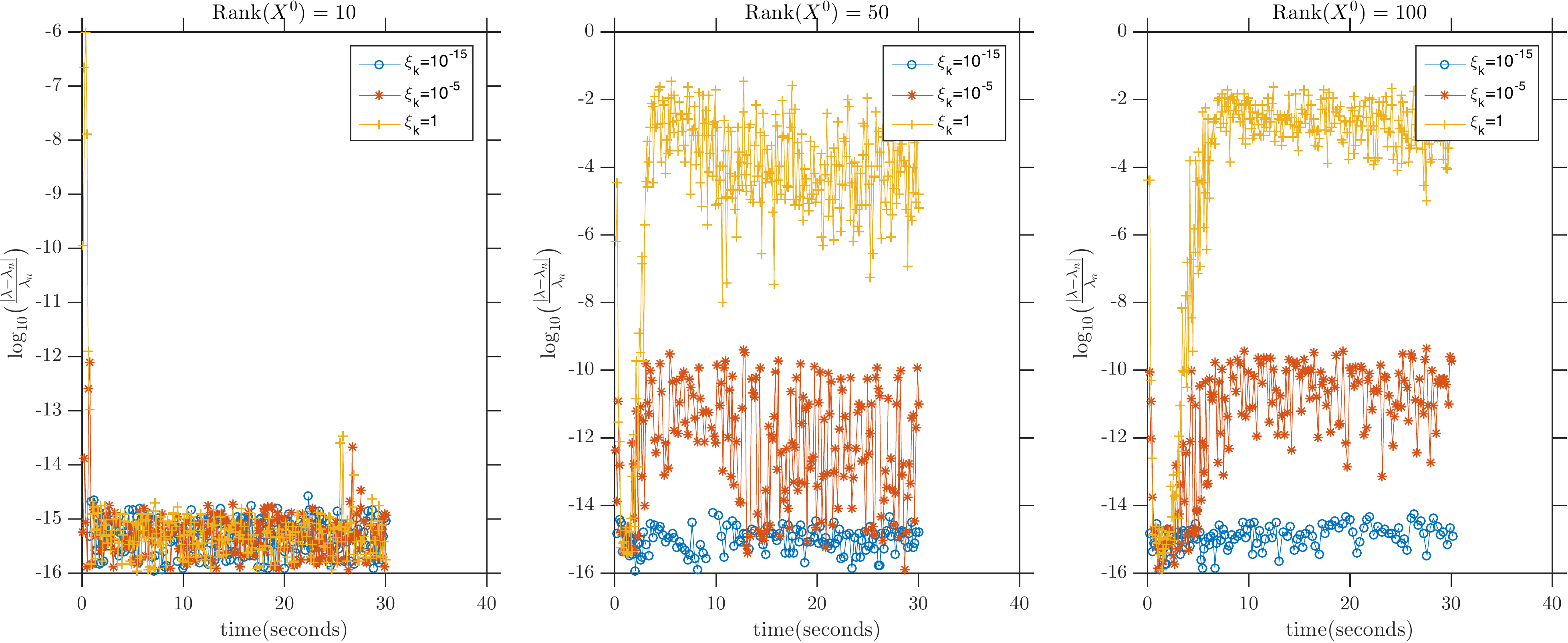}
	\caption{The vertical axis is the relative difference between approximate eigenvalue $\lambda$ with $\xi_k=\{10^{-15},10^{-5},1\}$
		and the very accurate eigenvalue $\lambda_n$ of $\lambda_n(\nabla f(X_{k-1}))$, computed with tolerance $\xi_k$ equal to machine precision $10^{-16}$.}
	\label{f2}
		\vspace{-.3cm}
\end{figure}

The assumption that the approximate eigenvalue $\lambda$ returned by the eigenvalue solver
is approximately equal to the true smallest eigenvalue $\lambda_n$ (Equation \eqref{ap1})
is supported by Figure \ref{f2}. We computed the true eigenvalue $\lambda_n$ by calling ARPACK with a very tight tolerance.
It is interesting that for a low rank model, the estimate $\lambda$ is very accurate
even if $\xi_k$ is large. The relative error in $\lambda$ is about $10^{-2}$ on average when $\xi_k=1$ for high rank models. However, this is not too large: the relative error in our iterate $\frac{\|X_k-X^0\|_F^2}{\|X^0\|^2_F}$ is also about $10^{-2}$, hence these two errors are on the same scale.

Figure \ref{f3} shows the error $\epsilon_k = v^T\nabla f(X_{k-1})v - \alpha  \lambda_{n}(\nabla f(X_{k-1}))$
achieved by our linear optimization subproblem solver. It can be seen that for a constant tolerance $\xi_k$, the error $\epsilon_k$ is also almost constant after some initial transient behavior. Hence controlling $\xi_k$ indeed controls $\epsilon_k$.
\begin{figure}[h]
	\vspace{-.5cm}
	\includegraphics[width=\linewidth]{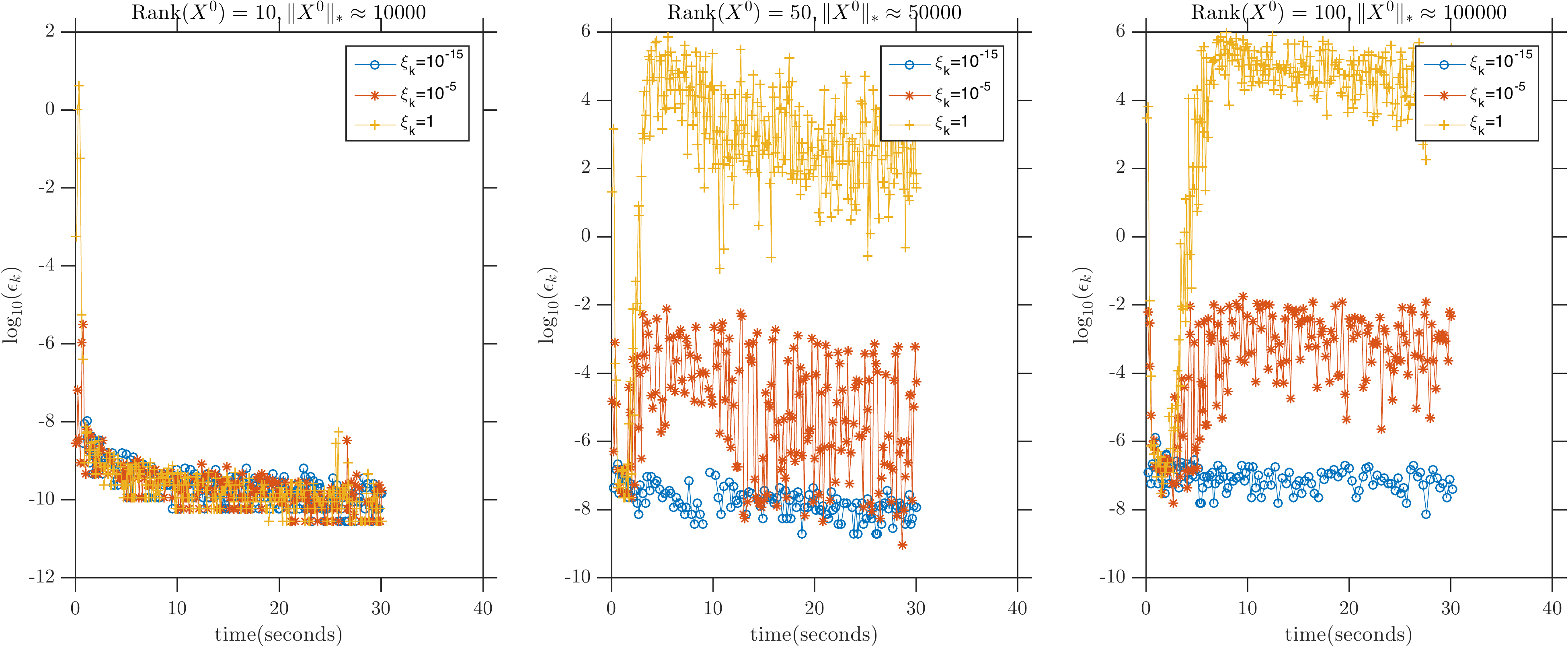}
	\caption{The actual evolution of the error $\epsilon_k=\alpha v^T\nabla f(X_{k-1})v -\alpha  \lambda_{n}(\nabla f(X_{k-1})).$}
	\label{f3}
	\vspace{-.5cm}
\end{figure}\begin{figure}[h]
\vspace{-0.5cm}
\includegraphics[width=\linewidth]{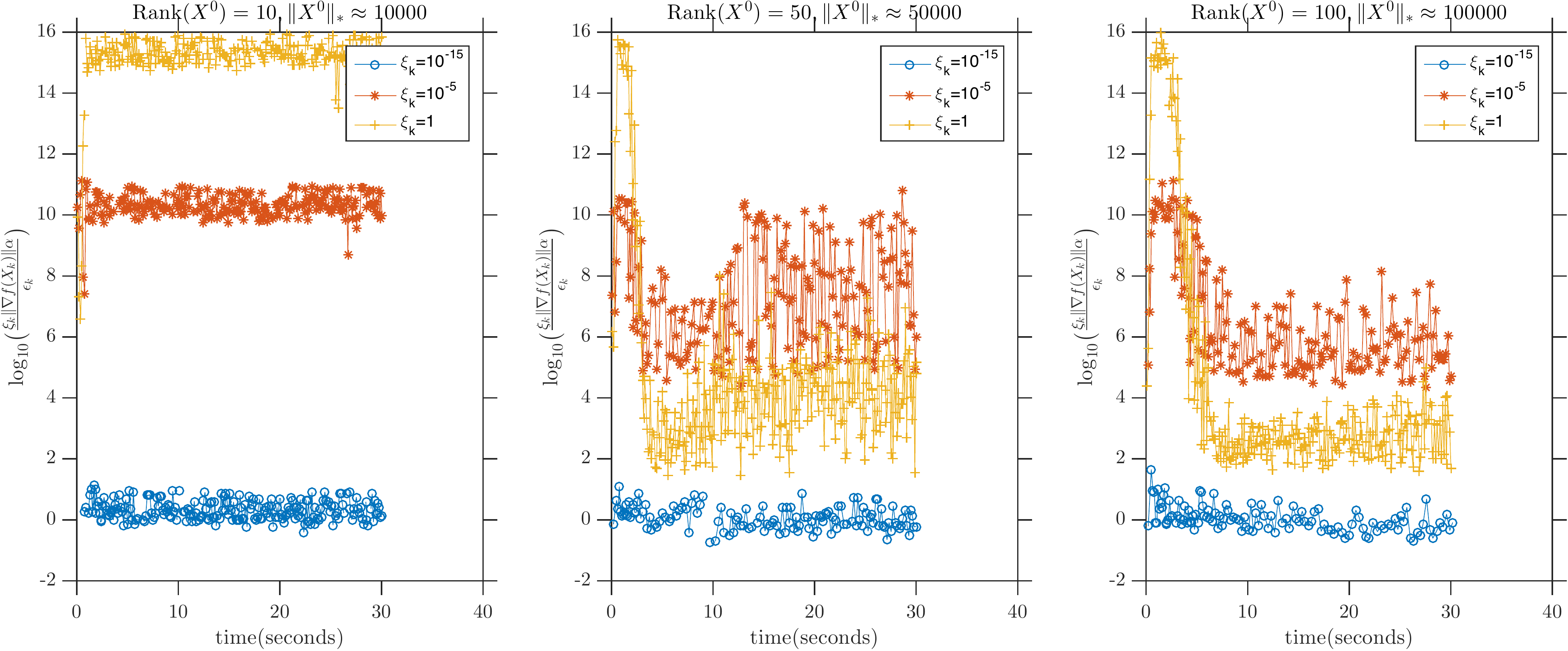}
\caption{Empirical evolution of the ratio $\frac{\xi_k \alpha \|\nabla f(X_{k-1})\| }{\epsilon_k}$.
	Recall that $\xi_k \alpha  \|\nabla f(X_{k-1})\| $ is an upper bound of $\epsilon_k$. }
\label{f4}
\end{figure}
Since our $\epsilon_k$ is approximately constant due to constant choice of $\xi_k$,
rather than decreasing as required by the assumptions of Theorem \ref{t0},
one might wonder whether the conclusion of Theorem \ref{t0} still holds.
The answer is yes. In fact, we found that at each iteration throughout our numerical experiments,
the inequality $\epsilon_k=\alpha v^T\nabla f(X_{k-1})v - \alpha \lambda_{n}(\nabla f(X_{k-1})) \leq \gamma_k LD^2 \delta$ is satisfied, with $\delta =1$ and $D = 2\|X^0\|_*$.
Thus the conclusion of Theorem \ref{t0} is still satisfied in our numerical results,
although we do not have explicit control over the error $\epsilon_k$.

We examine the accuracy of our bound $\xi_k\alpha\|\nabla f(X_{k-1})\|$ on $\epsilon_k$ in Figure \ref{f4}. It shows that our bound is rather conservative for higher value of $\xi_k$.

\section{ Stochastic Variance Reduced Frank-Wolfe (SVRF) algorithm with Approximate Oracle (\SVRF)}

Having seen that Frank-Wolfe is robust to using an approximate oracle when solving linear optimization subproblem \eqref{sp1}, we now turn to our second challenge: computing the gradient $\nabla f$.

To formalize the challenge, we will consider the optimization problem
\beq
\ba{ll}
\mbox{minimize} & f(x) :=\frac{1}{n}\sum_{i=1}^n f_i(x) \label{op2}\\
\mbox{subject to} &  x\in \Omega,\\
\ea
\eeq
where $x\in \reals^m$.  For each $i=1,\dots,n$,  $f_i$ is a convex continuously differentiable real valued function and $\Omega$ is a compact convex set in $\reals^m$. This is a particular instance of Problem \eqref{pr1}.

Problem \eqref{op2} is common in statistics and machine learning,
where each $f_i$ measures the error in observation $i$.
Computing the gradient $\nabla f$ in this setting is a challenge,
since the number of observations $n$ can be enormous.

One way to address this challenge is to compute an approximation to the gradient rather than the exact gradient.
We sample $l$ elements $i_1,\dots,i_l$ from the set $\{1,\dots,n\}$ with replacement and compute the \emph{stochastic gradient}
$$\tilde{\nabla} f(x)= \frac{1}{l}\sum_{j=1}^l \nabla f_{i_j}(x).$$
The parameter $l$ is called the size of the minibatch $\{i_1,\dots,i_l\}$.
The computational benefit here is that we compute only $l\ll n$ derivatives.
Intuitively, we expect this method to work since $\Expect[\tilde{\nabla}f(x)] = \nabla f(x)$.

Of course, the computational benefit does not come for free. This approach suffers one major drawback:
\begin{itemize}
	\item the stochastic gradient $\tilde{\nabla}f(x)$ may have very large variance $\var(\|\tilde{\nabla}f(x)\|_2)$
	even if $x$ is near $x^\star$. Large variance will destabilize any algorithm using $\tilde{\nabla}f(x)$,
	since even near the solution where $\|\nabla f(x)\|$ is small, $\|\tilde{\nabla}f(x)\|$ may be large.
\end{itemize}
One simple way to ensure  that $\tilde{\nabla}f(x)$ concentrates near $\nabla f(x)$ is to
increase the minibatch size $l$ as $\var(\tilde{\nabla}f(x)) = \frac{1}{l}\var(\nabla f_i(x))$,
where $i$ is chosen uniformly from $\{1,\dots,n\}$. 
But using a very large minibatch size $l$ defeats the purpose of using a stochastic gradient.

Variance reduction techniques endeavor to avoid this tradeoff \cite{svrg}.
Instead of using a large minibatch at each iteration,
they occasionally compute a full gradient and use it to reduce the variance of $\tilde{\nabla}f(x)$.
The modified stochastic gradient is called the variance-reduced stochastic gradient.
Johnson and Zhang \cite{svrg} introduced one way to perform variance reduction.
Specifically, they define a variance-reduced stochastic gradient at a point $x\in \Omega$
with respect to some \textit{snapshot} $x_0\in \Omega$ as
\[
\tilde{\nabla}f(x;x_0)=\nabla f_i(x) -(\nabla f_i(x_0)-\nabla f(x_0)),
\]
where $i$ is sampled uniformly from $\{1,\dots,n\}$.
Notice we require the full gradient $\nabla f(x_0)$ at the snapshot, but only the
gradient of the $i$th function $\nabla f_i(x)$ at the point $x$.
In this case, we still have  $\Expect \tilde{\nabla} f(x;x_0) =\nabla f(x)$, and the variance is
\[
\var(\|\tilde{\nabla}f(x;x_0)\|_2) =
\frac{1}{n} \sum_{i=1}^n \| \nabla f_i(x) -\nabla f_i(x_0)+(\nabla f(x_0)-\nabla f(x))\|^2_2.
\]
If $x$ and $x_0$ are near $x^\star$, the variance will be near zero and so indeed the variance is reduced.
We can further reduce the variance using a minibatch by independently sampling $l$
variance-reduced gradients $\tilde{\nabla}f(x;x_0)$ and taking their average.

Hazan and Luo \cite[Theorem 1]{Hazan} introduced
the stochastic variance reduced Frank-Wolfe (SVRF) algorithm,
which augments the Frank-Wolfe algorithm with the variance reduction technique of Johnson and Zhang,
and showed that it converges in expectation when an exact oracle is used for the linear optimization subproblem \eqref{sp1}.
As we will see in Theorem \ref{t1}, the number of evaluation of full gradient and stochastic gradient is also considerably small.

As we saw in the previous section, Frank-Wolfe with an approximate oracle converges
at the same rate as the one using an exact oracle
for the linear optimization subproblem \eqref{sp1}.
One naturally wonders whether an approximate oracle is allowed when we use stochastic gradients.
We will show below that the resulting algorithm,
which we call SVRF with approximate oracle
(\SVRF, pronounced as ``tilde SVRF'') and
present as Algorithm \ref{svrf}, indeed works well.
Note that when $\epsilon_k=0$ for each $k$, Algorithm \ref{svrf} reduces to SVRF. 

\begin{algorithm}
	\caption{SVRF with approximate oracle (\SVRF) \label{svrf}}
	\begin{algorithmic}[1]
		\STATE {\bf Input:} Objective function $f = \frac{1}{n} \sum_{i=1}^n f_i$
		\STATE{\bf Input:} A feasible starting point $w_{-1}\in \Omega$
		\STATE{\bf Input:} Stepsize $\gamma_k$, minibatch size $m_k$, epoch length $N_t$ and tolerance sequence $\epsilon_k$
		\STATE {\bf Initialize:} Find $x_0$ s.t. $ \nabla f(w_{-1})^Tx_0 \leq \min_{x\in \Omega} \nabla f(w_{-1}) ^Tx+\epsilon_0$.
		\FOR{$t=1,2, \ldots, T$}
		\STATE Take a snapshot $w_0 = x_{t-1}$ and compute gradient $\nabla f(w_0)$.
		\FOR{$k=1$ to $N_t$} \label{kiter}
		\STATE Compute $g_k$, the average of $m_k$ iid samples of $\tilde{\nabla}f(w_{k-1},w_0)$.
		\STATE Compute $v_k$ s.t. $g_k^T v_k \leq \min_{v\in \Omega} g_k^Tv +\epsilon_k$.
		\STATE Update $w_k := (1-\gamma_k)w_{k-1}+\gamma_k v_k$. \label{lst:line:up}
		\ENDFOR
		\STATE Set $x_t = w_{N_t}$ \label{kiter2} .
		\ENDFOR

		\STATE {\bf Output:} The last iteration result $x_T$.
	\end{algorithmic}

\end{algorithm}

We give a quantitative description of the objective value convergence $f(x_k) -f(x^\star)$
for Algorithm \ref{svrf} in Theorem \ref{t1}.
Moreover, we show that the convergence rate is the same as the one using the exact
subproblem oracle up to a multiplicative user-specified constant.

In Algorithm \ref{svrf}, each time we take a snapshot, we let $k=1$ again
and the algorithm essentially restarts. Another option available is not to restart $k$.
This modification is suggested and implemented in \cite{Hazan};
further, they observe this algorithmic variant is more stable.
This modification ensures that the stepsize always decreases, and so intuitively should increase the stability.

We state this modification as Algorithm \ref{svrf2} below.
We show it converges in expectation with the same rate as Algorithm \ref{svrf},
and that it converges almost surely.
These results are new to the best of our knowledge,
and theoretically justify why a diminishing stepsize makes the algorithm
more stable: the optimality gap converges almost surely to $0$ rather than just in expectation!

\begin{algorithm}
	\caption{Stable \SVRF, $k$ increasing in  line~\ref{kiter} of Algorithm \ref{svrf} }\label{svrf2}
	\begin{algorithmic}[1]
		\STATE \dots as Algorithm \ref{svrf}, except replacing the chunk from line~\ref{kiter}  to line \ref{kiter2} with the following chunk and start $k$ at $k=1$ when $t=1$.
		\WHILE{$k \leq N_t$}
		\STATE Compute ${g}_k$, the average of $m_k$ iid samples of     $\tilde{\nabla}f(w_{k-1},w_0)$.
		\STATE Compute $v_k$ s.t. $g_k^T v_k \leq \min_{v\in \Omega} g_k^Tv +\epsilon_{k}.$
		\STATE Update $w_k := (1-\gamma_k)w_{k-1}+\gamma_k v_k$ and $k=k+1$.
		\ENDWHILE
		\STATE Set $x_t = w_{N_t}$.
	\end{algorithmic}
\end{algorithm}

\section{Theoretical guarantees for \SVRF}

We show below that \SVRF has the same convergence rate as SVRF, up to constants
depending on the error level $\delta$.
The proof is analogous to the one in Hazan and Luo \cite[Theorem 1]{Hazan},
with some additional care in handling the error term.

\begin{theorem}\label{t1}
	Suppose each $f_i$ is $L$-smooth and $\Omega$ has diameter $D$.
	Then for any $\delta>0$,
	Algorithms \ref{svrf} and \ref{svrf2}
	with parameters
	\[
	\gamma_k =\frac{2}{k+1},
	\quad m_k=96(k+1), \quad N_t =2^{t+3}-2,
	\quad \epsilon_{k}=\frac{LD^2}{2}\gamma_k\delta
	\]
	ensure that for any $t$,
	\[
	\Expect[ f(x_t)-f(x^\star)]\leq \frac{LD^2(1+\delta)}{2^{t+1}}.
	\]
	Moreover, for any $k$,
	\[
	\Expect [f(w_k)-f(x^\star)] \leq \frac{4LD^2(1+\delta)}{k+2}.
	\]
\end{theorem}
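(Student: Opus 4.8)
The plan is to mirror the single-snapshot analysis of Theorem~\ref{t0}, but to control the error introduced by replacing the true gradient $\nabla f(w_{k-1})$ with the variance-reduced minibatch estimate $g_k$. First I would establish a per-iteration recursion. Using Proposition~\ref{p0} with $x=w_k$, $y=w_{k-1}$ and the update $w_k = w_{k-1}+\gamma_k(v_k-w_{k-1})$, I get
\[
f(w_k) \leq f(w_{k-1}) + \gamma_k \nabla f(w_{k-1})^T(v_k-w_{k-1}) + \tfrac{L}{2}\gamma_k^2 D^2.
\]
The subtlety is that $v_k$ is chosen to be $\epsilon_k$-optimal for $g_k^Tv$, not for $\nabla f(w_{k-1})^Tv$. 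I would add and subtract $g_k$: write $\nabla f(w_{k-1})^T(v_k-w_{k-1}) = g_k^T(v_k-w_{k-1}) + (\nabla f(w_{k-1})-g_k)^T(v_k-w_{k-1})$, bound the first piece using $\epsilon_k$-optimality of $v_k$ against $x^\star$ and then convexity of $f$ (exactly as in the chain~\eqref{prt0}), and bound the second piece by $\|\nabla f(w_{k-1})-g_k\|\cdot D$ via Cauchy--Schwarz. This yields, after taking conditional expectation, something like
\[
\Expect[f(w_k)-f(x^\star)] \leq (1-\gamma_k)\Expect[f(w_{k-1})-f(x^\star)] + \gamma_k D\,\Expect\|\nabla f(w_{k-1})-g_k\| + \tfrac{LD^2}{2}\gamma_k^2(1+\delta),
\]
where the $(1+\delta)$ absorbs $\epsilon_k=\tfrac{LD^2}{2}\gamma_k\delta$ exactly as before.

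Next I would control the variance term $\Expect\|\nabla f(w_{k-1})-g_k\|$. Since $g_k$ is the average of $m_k$ iid unbiased samples of the variance-reduced gradient at $(w_{k-1},w_0)$, Jensen gives $\Expect\|\nabla f(w_{k-1})-g_k\| \leq (\Expect\|\nabla f(w_{k-1})-g_k\|^2)^{1/2} = (\tfrac{1}{m_k}\var(\tilde\nabla f(w_{k-1},w_0)))^{1/2}$. Using $L$-smoothness of each $f_i$ one bounds $\var(\tilde\nabla f(w_{k-1},w_0)) \leq L^2\|w_{k-1}-w_0\|^2 \leq L^2 D^2$ in the crude form; but the sharper bound — the one Hazan--Luo use and the one needed to get the stated rate — relates the variance to the optimality gaps $f(w_{k-1})-f(x^\star)$ and $f(w_0)-f(x^\star)$ (a consequence of $L$-smoothness plus convexity: $\|\nabla f_i(x)-\nabla f_i(x^\star)\|^2 \leq 2L(f_i(x)-f_i(x^\star)-\nabla f_i(x^\star)^T(x-x^\star))$, summed over $i$). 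With $m_k = 96(k+1)$ this makes the variance contribution small enough to be folded into the recursion.

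Then comes the two-layer induction. I would first prove the inner claim $\Expect[f(w_k)-f(x^\star)] \leq \tfrac{4LD^2(1+\delta)}{k+2}$ by induction on $k$ within an epoch (using $\gamma_k=\tfrac{2}{k+1}$ and the recursion above, with the base case supplied by the quality of the snapshot $x_{t-1}$), carefully choosing the constant $96$ so the variance term does not spoil the $\tfrac{1}{k+2}$ decay — this is the algebra analogous to the display chain at the end of the proof of Theorem~\ref{t0}. Given the inner bound, the outer bound $\Expect[f(x_t)-f(x^\star)] \leq \tfrac{LD^2(1+\delta)}{2^{t+1}}$ follows by induction on $t$: since $x_t = w_{N_t}$ with $N_t = 2^{t+3}-2$, the inner bound gives $\Expect[f(x_t)-f(x^\star)] \leq \tfrac{4LD^2(1+\delta)}{N_t+2} = \tfrac{4LD^2(1+\delta)}{2^{t+3}} = \tfrac{LD^2(1+\delta)}{2^{t+1}}$, and the snapshot quality at epoch $t+1$ is exactly this. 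For Algorithm~\ref{svrf2}, where $k$ does not reset, I would observe that the recursion is indexed by the global counter $k$ and the same inner induction applies verbatim, since at the start of each epoch the snapshot only improves the bound; almost-sure convergence then follows from the expectation bound via a Borel--Cantelli / supermartingale argument (the optimality gap is nonnegative and its expectation is summable along a subsequence, or one invokes a supermartingale convergence theorem on a suitably corrected sequence).

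\textbf{Main obstacle.} The delicate point is not the Frank-Wolfe recursion — that is a routine adaptation of Theorem~\ref{t0} — but getting the variance bound in the \emph{right form}: one must bound $\var(\tilde\nabla f(w_{k-1},w_0))$ by a constant times $\max\{f(w_{k-1})-f(x^\star),\,f(w_0)-f(x^\star)\}$ (not merely by $L^2D^2$), so that the variance term in the recursion is itself $O(\gamma_k \cdot \text{optimality gap})$ and can be absorbed, and then choosing the minibatch constant ($96$) and epoch lengths so that the coupled induction closes with the clean constants stated. Threading the additional $\epsilon_k$ error through this argument without degrading the constant beyond the advertised $(1+\delta)$ factor is the extra care alluded to in the text.
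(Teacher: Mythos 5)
Your proposal follows the paper's argument essentially verbatim: the same per-iteration Frank–Wolfe recursion with a $g_k$-versus-$\nabla f$ cross term bounded by Cauchy--Schwarz and Jensen, the same variance-to-optimality-gap bound (the paper isolates this as Lemma~\ref{l1}, a restatement of Hazan--Luo's lemma via Proposition~\ref{p1}), the same absorption of $\epsilon_k$ into a $(1+\delta)$ factor, and the same coupled inner/outer induction closing with $N_t = 2^{t+3}-2$. The only cosmetic difference is that the paper collects the two error cross terms into the single quantity $(\nabla f(w_{s-1})-g_s)^T(v_s - x^\star)$ before applying Cauchy--Schwarz, which is what yields exactly the constant $96$ in $m_k$; your two-piece split works too but would require slightly different bookkeeping of constants.
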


One might be concerned that \SVRF is impractical, since the minibatch size required
to compute the approximate gradient increases linearly with $k$.
However, when the number of terms $n$ in the objective is sufficiently large,
in fact the complexity of \SVRF is lower than that of
Algorithm \ref{fwap}, Frank-Wolfe with approximate oracle.
Under the parameter settings in Theorem \ref{t1}, with a bit extra work, we see that
\SVRF requires
$\mymathcal{O}(\ln (\frac{LD^2(1+\delta)}{\epsilon}))$ full gradient evaluations,
$\mymathcal{O}(\frac{L^2D^4(1+\delta)^2}{\epsilon^2})$ stochastic gradient evaluations,
and the solution of $\mymathcal{O}(\frac{LD^2(1+\delta )}{\epsilon})$ linear optimization subproblems.
As a comparison, Algorithm \ref{fwap}, Frank-Wolfe with approximate oracle,
under the parameter settings in Theorem \ref{t0}, requires
$\mymathcal{O}(\frac{LD^2(1+\delta)}{\epsilon})$ full gradient evaluations and
the solution of the same number of linear optimization subproblems.
Suppose that the cost of computing the full gradient is $n$ times the cost of computing one stochastic gradient.
Then \SVRF enjoys a smaller computational cost than Algorithm \ref{fwap} if
$$\mymathcal{O}\left(\ln (\frac{LD^2(1+\delta)}{\epsilon})\right)
+ \frac{1}{n} \mymathcal{O}\left(\frac{L^2D^4(1+\delta)^2}{\epsilon^2}\right)
< \mymathcal{O}\left(\frac{LD^2(1+\delta)}{\epsilon}\right),$$
which is satisfied for large $n$.

We begin the proof using the smoothness of $f_i$ \cite[Theorem 2.1.5]{nesterov2013introductory}.

\begin{proposition} \label {p1}
	Suppose a real valued function $g$ is convex and $L$-smooth over its domain $\reals^n$.
	Then $g$ satisfies
	\begin{align*}
	\| \nabla g(w)-\nabla g(v)\|^2 \leq 2L(g(w)-g(v)-\nabla g(v)^T(w-v))
	\end{align*}
	for all $w,v \in \reals^n$.
\end{proposition}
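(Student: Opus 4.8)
The plan is to reduce the claim to the descent lemma of Proposition~\ref{p0} applied to a suitably shifted function. Fix $v \in \reals^n$ and introduce the auxiliary function $h(w) := g(w) - \nabla g(v)^T w$. Since $g$ is convex and we subtract only a linear term, $h$ is convex; since $\nabla h(w) = \nabla g(w) - \nabla g(v)$, the function $h$ inherits $L$-smoothness from $g$; and $\nabla h(v) = 0$, so $v$ is a global minimizer of $h$. These three facts are exactly what makes the argument work, and the shift is the one genuinely nonobvious idea in the proof.

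Next I would apply Proposition~\ref{p0} to $h$: for every $w$ and every $y$,
\[
h(y) \leq h(w) + \nabla h(w)^T(y - w) + \frac{L}{2}\|y - w\|^2 .
\]
The step to be careful about is choosing $y = w - \frac{1}{L}\nabla h(w)$, the minimizer of the right-hand side, and then bounding the left-hand side \emph{below} by the global minimum value $h(v)$. This yields
\[
h(v) \leq h\!\left(w - \tfrac{1}{L}\nabla h(w)\right) \leq h(w) - \frac{1}{2L}\|\nabla h(w)\|^2 ,
\]
and rearranging gives $\|\nabla h(w)\|^2 \leq 2L\big(h(w) - h(v)\big)$.

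Finally I would unwind the definitions. We have $\nabla h(w) = \nabla g(w) - \nabla g(v)$, and
\[
h(w) - h(v) = g(w) - g(v) - \nabla g(v)^T(w - v),
\]
so substituting these into the previous inequality produces precisely the asserted bound. Everything after the construction of $h$ is routine bookkeeping; the only real obstacle is recognizing that the descent inequality must be invoked at the optimally chosen displacement and then compared against $h(v)$ using minimality of $v$.
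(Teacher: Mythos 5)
Your proof is correct and follows exactly the paper's argument: construct the shifted function $h(w)=g(w)-\nabla g(v)^T w$, note that $v$ is its global minimizer, apply the quadratic upper bound (Proposition~\ref{p0}) at the displacement $w-\frac{1}{L}\nabla h(w)$, and then use minimality of $h(v)$ and rearrange. No meaningful differences.
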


\begin{proof}
	Consider $h(w) = g(w)-\nabla g(v) ^Tw$, which is also convex and $L$-smooth.
	The minimum of $h(w)$ occurs at $w = v$, since $\nabla h(v) =0$.
	Hence
	\beq
	\ba{ll}
	h(v)- h(w) &\leq h(w-\frac{1}{L}\nabla h(w))- h(w)\\
	& \leq- (\nabla g(w) -\nabla g(v))^T(\frac{1}{L}(\nabla g(w)-\nabla g(v))) +\frac{L}{2} \frac{1}{L^2} \|\nabla g(w) - \nabla g(v)\|^2 \\
	& \leq -\frac{1}{2L}\|\nabla g(w) - \nabla g(v)\|^2
	\ea
	\eeq
	where the second inequality is due to the smoothness of $h$. Substitute $h(w) = g(w)- \nabla g(v)^Tw$ back into the above inequality gives Proposition \ref{p1}.
\end{proof}

The second ingredient of the proof is bounding the variance of the reduced variance gradient $\tilde{\nabla} f(x_0,x)$  in terms of the difference between the current value and the optimal function value. Note that  $\tilde{\nabla} f(x_0,x)$ is an unbiased estimator of $\nabla f(x)$. The proof relies on Proposition \ref{p1} and can found in  Hazan and Luo \cite[Lemma 1]{Hazan}.
\begin{lemma}\label{l1}
	For any $x,x_0 \in \Omega$, we have
	\begin{align*}
	\Expect [ \| \tilde {\nabla}f(x;x_0)-\nabla f(x)\|^2] \leq 6L(2\Expect[ f(x)-f(x^\star)] +\Expect[f(x_0)-f(x^\star)]).
	\end{align*}
\end{lemma}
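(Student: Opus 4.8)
The plan is to run the standard variance-reduction argument in three moves: first reduce the left-hand side to a pure second moment using unbiasedness, then split that second moment with the triangle inequality around the reference point $x^\star$, and finally convert each squared gradient difference into a function-value gap via Proposition~\ref{p1}.

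Concretely, I would first note that because $i$ is uniform on $\{1,\dots,n\}$ and $f=\frac1n\sum_i f_i$, the estimator is unbiased, $\Expect_i[\tilde{\nabla}f(x;x_0)] = \nabla f(x)$, so $\tilde{\nabla}f(x;x_0)-\nabla f(x)$ equals $\bigl(\nabla f_i(x)-\nabla f_i(x_0)\bigr)$ minus its own mean; hence its second moment is at most the uncentered one, $\Expect_i\|\tilde{\nabla}f(x;x_0)-\nabla f(x)\|^2 \leq \Expect_i\|\nabla f_i(x)-\nabla f_i(x_0)\|^2$. Next, inserting $\pm\nabla f_i(x^\star)$ and using $\|a-b\|^2\leq 2\|a\|^2+2\|b\|^2$ gives $\Expect_i\|\nabla f_i(x)-\nabla f_i(x_0)\|^2 \leq 2\Expect_i\|\nabla f_i(x)-\nabla f_i(x^\star)\|^2 + 2\Expect_i\|\nabla f_i(x_0)-\nabla f_i(x^\star)\|^2$. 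Now apply Proposition~\ref{p1} to each convex $L$-smooth $f_i$ with $(w,v)=(x,x^\star)$, average over $i$ using $\frac1n\sum_i\nabla f_i(x^\star)=\nabla f(x^\star)$, and discard the term $-\nabla f(x^\star)^T(x-x^\star)$, which is nonpositive by the first-order optimality condition $\nabla f(x^\star)^T(x-x^\star)\geq 0$ for $x^\star\in\arg\min_{x\in\Omega}f(x)$ and $x\in\Omega$; this yields $\Expect_i\|\nabla f_i(x)-\nabla f_i(x^\star)\|^2\leq 2L\bigl(f(x)-f(x^\star)\bigr)$, and the same bound with $x_0$ in place of $x$. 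Combining, $\Expect_i\|\tilde{\nabla}f(x;x_0)-\nabla f(x)\|^2 \leq 4L\bigl(f(x)-f(x^\star)\bigr)+4L\bigl(f(x_0)-f(x^\star)\bigr)$, which is dominated by the claimed bound $6L\bigl(2(f(x)-f(x^\star))+(f(x_0)-f(x^\star))\bigr)$ since both gaps are nonnegative. Finally, when $x=w_{k-1}$ and $x_0=w_0$ are the (random) iterates, the index $i$ is drawn fresh and independently of the past, so the pointwise bound above may be taken conditionally on $(w_{k-1},w_0)$ and then have its outer expectation taken, giving the inequality exactly as stated.

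I do not expect a real obstacle here. The only steps that need a little care are recognizing that the left-hand side is genuinely a variance (so unbiasedness lets us drop $\|\nabla f(x)-\nabla f(x_0)\|^2$), and using the constrained optimality inequality $\nabla f(x^\star)^T(x-x^\star)\geq 0$ rather than stationarity $\nabla f(x^\star)=0$, since $x^\star$ only minimizes $f$ over $\Omega$; everything else is the triangle inequality and a direct application of Proposition~\ref{p1}.
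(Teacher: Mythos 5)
Your proof is correct, and it actually reaches a strictly tighter bound ($4L\bigl(f(x)-f(x^\star)\bigr)+4L\bigl(f(x_0)-f(x^\star)\bigr)$) before relaxing to the stated one. The key structural difference from the paper's proof is the first move: you observe that $\tilde{\nabla}f(x;x_0)-\nabla f(x)$ is exactly $\nabla f_i(x)-\nabla f_i(x_0)$ minus its mean, so ``variance $\le$ second moment'' immediately eliminates the $\nabla f(x_0)-\nabla f(x)$ term and leaves a single squared difference, to which you apply the two-term inequality $\|a-b\|^2\le 2\|a\|^2+2\|b\|^2$ around $\nabla f_i(x^\star)$. The paper instead inserts $\pm\nabla f_i(x^\star)$ and $\pm\nabla f(x^\star)$ first, regroups into three summands, applies the three-term inequality $\|a+b+c\|^2\le 3\bigl(\|a\|^2+\|b\|^2+\|c\|^2\bigr)$, and only then uses the variance-vs-second-moment trick on the middle summand; this produces the coefficients $3\cdot 2L$ on each piece and hence the constants $12L$ and $6L$. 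Both approaches then use the same final ingredient — Proposition~\ref{p1} applied termwise, averaged over $i$, and the constrained first-order optimality $\nabla f(x^\star)^T(x-x^\star)\ge 0$ to drop the cross term — so the difference is purely in the decomposition. Your route is more economical and buys a constant roughly a factor of $2$--$3$ smaller; the paper's route is slightly more mechanical (symmetrize everything around $x^\star$ first, then bound), which is perhaps why it appears in the original reference. Either way, the stated lemma follows.
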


\begin{proof}
	\beq
	\ba{lll}
	\Expect[ \|\tilde{\nabla} f(x;x_0)-\nabla f(x)\|^2]
	&= &\Expect [\| \nabla f_i(x) -\nabla f_i(x_0) +\nabla f(x_0) - \nabla f(x)\|^2]\\
	 &=&\Expect[\bigr\| \bigr(\nabla f_i(x)-\nabla f_i(x^\star) \bigr ) - \bigr (\nabla f_i(x_0)-\nabla f_i(x^\star)\bigr ) \\& &+ \bigr(\nabla f(x_0)- \nabla f(x^\star)\bigr) -\bigr(\nabla f(x)-\nabla f(x^\star)\bigr)\bigr\|^2] \\
	&\leq  &3\Expect[\| \nabla f_i(x)-\nabla f_i(x^\star)\|^2+\bigr\| \bigr(\nabla f_i(x)-\nabla f_i(x^\star)\bigr) \\& &- \bigr(\nabla f(x_0)- \nabla f(x^\star)\bigr)\bigr\|^2 +\|\nabla f(x)-\nabla f(x^\star)\|^2]\\
	&\leq &3 \Expect[\| \nabla f_i(x)-\nabla f_i(x^\star) \|^2+\| \nabla f_i(x_0)-\nabla f_i(x^\star) \|^2\\& &+\|\nabla f(x)-\nabla f(x^\star)\|^2]
	\ea
	\eeq
	where the first inequality is due to Cauchy-Schwarz and the fact that $2ab \leq a^2 +b^2$ for any $a,b\in \reals$. The second inequality is the variance $\Expect [\| (\nabla f_i(x)-\nabla f_i(x^\star)) - (\nabla f(x_0)- \nabla f(x^\star))\|^2]$ is less than its second moment $\Expect [\| (\nabla f_i(x)-\nabla f_i(x^\star)) \|^2]$.

	Now we apply Proposition \ref{p1} to the three terms above. For example, for the first term, we have
	\beq
	\ba {ll}
	\Expect [\|\nabla f_i(x)- \nabla f_i(x^\star)\|^2]& \leq 2L \Expect [f_i(x)-f_i(x^\star)- \nabla f_i(x^\star)^T(w-w^\star)]\\ &= 2L  (f(x)-f(x^\star)- \nabla f(x^\star)^T(x-x^\star))\\
	 &\leq 2L(f(x)-f(x^\star))
	\ea
	\eeq
	where the second inequality is due to the optimality of $x^\star$. Applying the proposition similarly to other two terms yields the lemma.
\end{proof}

The key to the proof of Theorem  \ref{t1} is the following lemma.

\begin{lemma}\label{l2}
	For any $t$ and $k$ in Algorithm \ref{svrf} and \ref{svrf2}, we have
	\[\Expect [f(w_k)-f(x^\star)] \leq \frac{4LD^2(1+\delta)}{k+2}\]
	if \[ \Expect [\| g_s -\nabla f(w_{s-1})\|^2]\leq \frac{L^2D^2(1+\delta)^2}{(s+1)^2}\]
	for all $s\leq k$.
\end{lemma}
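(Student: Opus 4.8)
The plan is to establish a one-step recursion for $h_k := \Expect[f(w_k)-f(x^\star)]$ and then close it by induction on $k$, in parallel with the proof of Theorem \ref{t0} but carrying along one extra error term that comes from using the approximate stochastic gradient $g_k$ in place of $\nabla f(w_{k-1})$.

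First I would apply the descent inequality of Proposition \ref{p0} to the update $w_k = w_{k-1} + \gamma_k(v_k - w_{k-1})$, which together with $\|v_k - w_{k-1}\| \le D$ gives
\[
f(w_k) \le f(w_{k-1}) + \gamma_k \nabla f(w_{k-1})^T(v_k - w_{k-1}) + \tfrac{L}{2}\gamma_k^2 D^2 .
\]
The work is in controlling the linear term. Writing $\nabla f(w_{k-1}) = g_k + (\nabla f(w_{k-1}) - g_k)$, then invoking the approximate-oracle guarantee $g_k^T v_k \le g_k^T x^\star + \epsilon_k$ and convexity of $f$ (so $\nabla f(w_{k-1})^T(x^\star - w_{k-1}) \le f(x^\star) - f(w_{k-1})$), the two stray inner products collapse into a single one and one obtains
\[
\nabla f(w_{k-1})^T(v_k - w_{k-1}) \le f(x^\star) - f(w_{k-1}) + \epsilon_k + (g_k - \nabla f(w_{k-1}))^T(x^\star - v_k) .
\]
Here it is essential \emph{not} to try to kill the last term via unbiasedness of $g_k$: since $v_k$ is computed from $g_k$ it is correlated with $g_k$, so instead I would bound it pointwise by Cauchy--Schwarz and the diameter, $(g_k - \nabla f(w_{k-1}))^T(x^\star - v_k) \le D\,\|g_k - \nabla f(w_{k-1})\|$.

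Combining the two displays, subtracting $f(x^\star)$, taking expectations, and using Jensen's inequality $\Expect\|g_k - \nabla f(w_{k-1})\| \le (\Expect\|g_k - \nabla f(w_{k-1})\|^2)^{1/2}$ with the hypothesis, I would substitute $\gamma_k = \frac{2}{k+1}$ and $\epsilon_k = \frac{LD^2}{2}\gamma_k\delta$. The three error contributions $\gamma_k\epsilon_k$, $\gamma_k D\cdot\frac{LD(1+\delta)}{k+1}$, and $\frac{L}{2}\gamma_k^2 D^2$ equal $\frac{2LD^2}{(k+1)^2}$ times $\delta$, $1+\delta$, and $1$ respectively, so they sum to exactly $\frac{4LD^2(1+\delta)}{(k+1)^2}$, giving
\[
h_k \le \Big(1-\tfrac{2}{k+1}\Big)h_{k-1} + \frac{4LD^2(1+\delta)}{(k+1)^2} = \frac{k-1}{k+1}\,h_{k-1} + \frac{4LD^2(1+\delta)}{(k+1)^2}.
\]

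Finally I would run the induction with $C := 4LD^2(1+\delta)$, claiming $h_k \le \frac{C}{k+2}$. The base case is $k=1$: since $\gamma_1 = 1$ the coefficient $\frac{k-1}{k+1}$ vanishes, so $h_1 \le \frac{C}{4} \le \frac{C}{3}$ with no need to control $h_0$ — which is exactly what makes the bound hold within each epoch irrespective of the quality of the previous snapshot. For the inductive step, $h_{k-1}\le\frac{C}{k+1}$ yields $h_k \le \frac{k-1}{k+1}\cdot\frac{C}{k+1} + \frac{C}{(k+1)^2} = \frac{Ck}{(k+1)^2}$, and $\frac{k}{(k+1)^2}\le\frac{1}{k+2}$ is just $k(k+2)\le(k+1)^2$. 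I expect the only real obstacle to be the bookkeeping in the second step — verifying that the two error inner products combine into $(g_k-\nabla f(w_{k-1}))^T(x^\star-v_k)$ and that Cauchy--Schwarz (not an unbiasedness argument) is the right tool — after which matching the constant $\frac{4LD^2(1+\delta)}{(k+1)^2}$ and completing the induction are routine.
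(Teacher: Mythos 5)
Your argument is essentially identical to the paper's proof: the same $L$-smoothness descent step, the same decomposition of $\nabla f(w_{k-1})^T(v_k-w_{k-1})$ via $g_k$ and the approximate-oracle guarantee, the same pointwise Cauchy--Schwarz plus Jensen bound on $(g_k-\nabla f(w_{k-1}))^T(x^\star-v_k)$, and the same recursion closed by the same induction with base case $k=1$, $\gamma_1=1$. Your explicit remarks that unbiasedness cannot be used (because $v_k$ is correlated with $g_k$) and that $h_0$ drops out at $k=1$ (which is what makes the bound epoch-independent) are both correct and capture the key points of the paper's argument.
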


\begin{proof}
	The $L$-smoothness of $f$ gives that for any $s\leq k$, \[ f(w_s) \leq f(w_{s-1})+\nabla f(w_{s-1})^T(w_s-w_{s-1})+\frac{L}{2}\|w_s-w_{s-1}\|^2 .\]

	Under Algorithm \ref{svrf} or \ref{svrf2}, we have $w_s= (1-\gamma_s)w_{s-1} +\gamma_s v_s $. Plugging this in the above inequality gives  \[f(w_s) \leq  f(w_{s-1}) +\gamma_s \nabla f(w_{s-1})^T(v_s-w_{s-1}) +\frac{L\gamma_s^2}{2} \|v_s-w_{s-1}\|^2.\]

	Using the definition of the diameter of $\Omega$, we can rearrange the previous inequality as
	\[ f(w_s) \leq f(w_{s-1})+\gamma_s g_s^T(v_s-w_{s-1})+\gamma_s(\nabla f(w_{s-1})-g_s)^T(v_s-w_{s-1})+\frac{LD^2\gamma_s^2}{2}.\]

	Since $g_s^Tv_s \leq \min_{w\in \Omega } g_s^Tw +\frac{\gamma_s\delta LD^2}{2}\leq g_s^Tx^\star +\frac{\gamma_s\delta LD^2}{2}$,
	we arrive at
	\begin{align}
	f(w_s) \leq & f(w_{s-1})+\gamma_s \nabla f(w_{s-1})^T(x^\star-w_{s-1}) \label{ineq1}\\
	&+\gamma_s(\nabla f(w_{s-1})-g_s)^T(v_s-x^\star)+\frac{LD^2(1+\delta)\gamma_s^2}{2} \nonumber.
	\end{align}

	By convexity, the term $\nabla f(w_{s-1})^T(x^\star-w_{s-1})$ is upper bounded by $f(x^\star) - f(w_{s-1})$, and Cauchy-Schwarz inequality yields that $$|(\nabla f(w_{s-1})-g_s)^T(v_s-x^\star)| \leq D\|g_s -\nabla f(w_{s-1})\|.$$ The assumption on $\|g_s -\nabla f(w_{s-1})\|^2$ gives $\Expect [\|g_s -\nabla f(w_{s-1})\|]$ is at most $\frac{LD(1+\delta)}{s+1}$ by Jensen's inequality. Recalling $\gamma_s =\frac{2}{s+1}$, we have
	\begin{align*}
	& \Expect[ f(w_s)-f(x^\star)]\\
	\leq & (1-\gamma_s)\Expect [f(w_{s-1})-f(x^\star)] +\frac{LD^2 \gamma_s^2(1+\delta)}{2} +\frac{LD^2\gamma_s^2(1+\delta)}{2}\\
	=& (1-\gamma_s)\Expect[f(w_{s-1})-f(x^\star)] +LD^2\gamma_s^2(1+\delta).
	\end{align*}

	We now prove $\Expect [f(w_k)-f(x^\star)]\leq \frac{4LD^2(1+\delta)}{k+2}$ by induction. The base case $k=1$ is simple by noting $\gamma_1=1$ and $$\Expect[ f(w_1)-f(x^\star)]
	\leq
	(1-\gamma_1)\Expect[f(w_{0})-f(w^*)] +\gamma_1 LD^2(1+\delta)=LD^2(1+\delta).$$
	Now suppose for $k=s-1$, $\Expect[f(w_{s-1})-f(x^\star)]\leq \frac{4LD^2(1+\delta)}{s+1} $.
	Then with $\gamma_ s =\frac{2}{s+1}$, we have for $k=s$
	\[
	\Expect [f(w_s)-f(x^\star)]\leq \frac{4LD^2(1+\delta)}{s+1}\left(1-\frac{2}{s+1}+\frac{1}{s+1}\right)\leq \frac{4LD^2(1+\delta)}{s+2},\]
	which completes the induction.
\end{proof}

With this lemma, we are able to prove Theorem \ref{t1}

\begin{proof}[Proof of Theorem \ref{t1}]
	We proceed by induction. In the base case $t=0$, we have
	\begin{align*}
	f(x_0) & \leq f(w_{-1}) +\nabla f(w_{-1})^T(x_0-w_{-1}) +\frac{L}{2}\|w_{-1}-x_0\|^2 \\
	&\leq f(w_{-1})+\nabla f(w_{-1})^T(x^\star-w_{-1}) +\frac{LD^2}{2} +\frac{LD^2\delta}{2}\\
	&\leq f(x^\star) +\frac{LD^2(1+\delta)}{2},
	\end{align*}
	where we use the $L$-smoothness in the first inequality, the near optimality of $x_0$ in the second inequality and convexity of $f$ in the last inequality.

	Now we assume that $\Expect [f(x_{t-1}) -f(x^\star)] \leq \frac{LD^2(1+\delta)}{2^t}$ and we are in Algorithm \ref{svrf}. We consider iteration of the algorithm and use another induction to show $\Expect [f(w_k)-f(x^\star)]\leq \frac{4LD^2(1+\delta)}{k+1}$ for any $ k\leq N_t$. The base case $w_0 =x_{t-1}$ is clearly satisfied because of the induction hypothesis $\Expect [f(x_{t-1} )-f(x^\star)] \leq \frac{LD^2(1+\delta)}{2^t}$.
	Given the induction hypothesis $\Expect [f(w_{s-1}-f(x^\star))]\leq \frac{4LD^2(1+\delta)}{s+1}$ for any $s\leq k$, we have
	\begin{align*}
	&\Expect[\|g_s -\nabla f(w_{s-1})\|^2]\\
	\leq & \frac{6L}{m_s}(2\Expect [f(w_{s-1})-f(x^\star)]+\Expect[f(w_0)-f(x^\star)])\\
	\leq & \frac{6L}{m_s}\left( \frac{8LD^2(1+\delta)}{s+1} + \frac{LD^2(1+\delta)}{2^t}\right)\\
	\leq  & \frac{6L}{m_s}\left(\frac{8LD^2(1+\delta)}{s+1} + \frac{8LD^2(1+\delta)}{s+1}\right) \\
	=& \frac{L^2D^2(1+\delta)}{(s+1)^2}\leq \frac{L^2D^2(1+\delta)^2}{(s+1)^2}
	\end{align*}
	where the first inequality use Lemma \ref{l1} and the fact that variance reduced by a factor $m_s$ as $g_s$ is the average of $m_s$ iid samples of $\tilde{\nabla} f(w_{s-1};w_0)$ and the second and third inequality are due to the two induction hypothesis and $s\leq N_t =2^{t+3}-2$. The last equality is due to the choice of $m_s$. Therefore, we see the condition of Lemma \ref{l2} is satisfied and the induction is completed.

	Now suppose we are in the situation of Algorithm \ref{svrf2}. The only difference here is that we don't restart $k$ at $1$. Assuming that $\Expect [f(x_{s-1}) -f(x^\star)] \leq \frac{LD^2(1+\delta)}{2^t}$ for all $s\leq t-1$ and by inspecting previous argument, we only need to show $\Expect [f(w_k)-f(x^\star)]\leq \frac{4LD^2(1+\delta)}{k+1}$ for any $ k\leq N_t$. Since our $k$ is always increasing, we cannot directly employ our previous argument. By the structure of our algorithm,
	we can split the range of $k$ into $t$ cycles $\{1,\dots,N_1\}, \{N_{1}+1,\dots, N_{2}\},\dots, \{N_{t-1}+1,\dots, N_t\}$.
	Now within each cycle, we can apply the previous argument, and thus we indeed have $\Expect [f(w_k)-f(x^\star)]\leq \frac{4LD^2(1+\delta)}{k+2}$ for any $ k\leq N_t$.

	By the choice of $N_t$, we see
	\begin{align*}
  \Expect [f(w_{N_t})-f(x^\star)]
	&= \Expect[f(x_{t})-f(x^\star)]\\
	&\leq  \frac{4LD^2(1+\delta)}{N_t+2} \\
&	=  \frac{LD^2(1+\delta)}{2^{t+1}}.
	\end{align*}

\end{proof}
The authors of \cite{Hazan} mention that Algorithm \ref{svrf2} seems to be more stable than Algorithm \ref{svrf}. We give the following theoretical justification for this empirical observation.

\begin{theorem} \label{t2}
	Under the same assumption of Theorem \ref{t1},  we have
	\[\lim_{s\rightarrow \infty}f(w_s) =f(x^\star)\]
	with probability $1$ for Algorithm \ref{svrf2}.
\end{theorem}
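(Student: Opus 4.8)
\textit{Proof proposal for Theorem \ref{t2}.}

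The plan is to upgrade the in-expectation $\mymathcal{O}(1/k)$ rate of Theorem \ref{t1} to almost sure convergence by combining a \emph{pathwise} one-step recursion with a summability argument, in the spirit of the Robbins--Siegmund almost supermartingale convergence theorem. Write $a_s := f(w_s)-f(x^\star)\geq 0$. First I would extract from the proof of Lemma \ref{l2}, before any expectation is taken, the pathwise inequality
\[
a_s \leq (1-\gamma_s)\,a_{s-1} + \gamma_s(\nabla f(w_{s-1})-g_s)^T(v_s-x^\star) + \tfrac{LD^2(1+\delta)}{2}\gamma_s^2 ,
\]
which follows from $L$-smoothness of $f$, the near-optimality $g_s^Tv_s\leq g_s^Tx^\star+\tfrac{\gamma_s\delta LD^2}{2}$ of $v_s$, and convexity. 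Bounding the cross term by Cauchy--Schwarz and the diameter, $(\nabla f(w_{s-1})-g_s)^T(v_s-x^\star)\leq D\|g_s-\nabla f(w_{s-1})\|$, gives the scalar recursion
\[
a_s \leq (1-\gamma_s)\,a_{s-1} + b_s, \qquad b_s := \gamma_s D\|g_s-\nabla f(w_{s-1})\| + \tfrac{LD^2(1+\delta)}{2}\gamma_s^2 \geq 0 ,
\]
valid for Algorithm \ref{svrf2} for every $s\geq 1$, with the index $s$ running over all iterations (never reset); note $\gamma_s=\frac{2}{s+1}\in(0,1]$ and $\sum_s\gamma_s=\infty$.

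Next I would show $\sum_{s\geq1} b_s<\infty$ almost surely. The deterministic part is immediate since $\sum_s\gamma_s^2<\infty$. For the stochastic part, recall that the proof of Theorem \ref{t1} establishes $\Expect[\|g_s-\nabla f(w_{s-1})\|^2]\leq \frac{L^2D^2(1+\delta)^2}{(s+1)^2}$ for every $s$ (the cycle-by-cycle argument there is exactly what makes this bound apply to the non-restarting Algorithm \ref{svrf2}). Hence $\sum_s \Expect[\|g_s-\nabla f(w_{s-1})\|^2]<\infty$, so by monotone convergence $\Expect\big[\sum_s \|g_s-\nabla f(w_{s-1})\|^2\big]<\infty$, and therefore $\sum_s \|g_s-\nabla f(w_{s-1})\|^2<\infty$ almost surely. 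Using $\gamma_s\|g_s-\nabla f(w_{s-1})\|\leq \tfrac12\gamma_s^2 + \tfrac12\|g_s-\nabla f(w_{s-1})\|^2$ and $\sum_s\gamma_s^2<\infty$ then gives $\sum_s \gamma_s\|g_s-\nabla f(w_{s-1})\|<\infty$ almost surely, hence $\sum_s b_s<\infty$ almost surely.

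Finally I would invoke a deterministic convergence lemma: if $a_s\geq0$, $\gamma_s\in[0,1]$ with $\sum_s\gamma_s=\infty$, $b_s\geq0$ with $\sum_s b_s<\infty$, and $a_s\leq(1-\gamma_s)a_{s-1}+b_s$, then $a_s\to0$. Its proof is a short unrolling: for any fixed $S$, $a_s\leq \big(\prod_{j=S+1}^s(1-\gamma_j)\big)a_S + \sum_{j=S+1}^s b_j$; the product tends to $0$ as $s\to\infty$ because $\sum_j\gamma_j=\infty$, and $a_S$ is finite (indeed $a_S\leq \max_{x\in\Omega}f(x)-f(x^\star)<\infty$ since $w_S\in\Omega$ and $f$ is continuous on the compact set $\Omega$); choosing $S$ so that $\sum_{j>S}b_j$ is arbitrarily small then forces $\limsup_s a_s\leq 0$. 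Applying this on the almost sure event where $\sum_s b_s<\infty$ yields $f(w_s)\to f(x^\star)$ almost surely, which is Theorem \ref{t2}.

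I expect the main obstacle to be the bookkeeping in the first two steps: one must be careful that the recursion is genuinely pathwise — the proof of Lemma \ref{l2} replaces the cross term by its conditional expectation, whereas here we must retain it and bound it directly by $D\|g_s-\nabla f(w_{s-1})\|$ — and that the second-moment estimate $\Expect[\|g_s-\nabla f(w_{s-1})\|^2]=\mymathcal{O}(1/(s+1)^2)$ from Theorem \ref{t1} really does hold uniformly over all $s$ for the ever-increasing index of Algorithm \ref{svrf2}, which is precisely the role of the cycle decomposition in that proof. Everything after that is routine.
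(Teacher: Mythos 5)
Your proof is correct, but it takes a genuinely different route from the paper's. The paper drops the contraction factor $(1-\gamma_s)$, establishes a conditional-expectation recursion $\Expect_s[X_s]\leq X_{s-1}+b_s$ with a \emph{deterministic} summable $b_s$ (using the crude pathwise bound $\Expect_s\|g_s-\nabla f(w_{s-1})\|\leq\sqrt{18LB/m_s}=\mymathcal{O}(s^{-1/2})$, $B=\sup_\Omega f - f(x^\star)$), turns $X_s+\sum_{i>s}b_i$ into a bounded-below supermartingale, invokes the martingale convergence theorem to get $X_s\to X$ a.s., and then \emph{separately} appeals to the in-expectation rate of Theorem~\ref{t1} to force $\Expect X\leq 0$, hence $X=0$. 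You instead keep the contraction $(1-\gamma_s)$ pathwise, make the perturbation $b_s$ a \emph{random} variable $\gamma_s D\|g_s-\nabla f(w_{s-1})\|+\tfrac{LD^2(1+\delta)}{2}\gamma_s^2$, prove $\sum_s b_s<\infty$ a.s.\ by summing the finer second-moment bound $\Expect\|g_s-\nabla f(w_{s-1})\|^2=\mymathcal{O}(s^{-2})$ from the proof of Theorem~\ref{t1} (together with monotone convergence and AM--GM), and close with a purely deterministic unrolling lemma on each sample path. Both proofs are sound. Your approach avoids the martingale convergence theorem entirely and identifies the limit directly as zero (no second appeal to Theorem~\ref{t1} is needed because you never discard the contraction); the cost is that you must be careful that the $\mymathcal{O}(s^{-2})$ second-moment bound genuinely holds for every iteration index of Algorithm~\ref{svrf2}, which is exactly what the cycle-decomposition argument in the proof of Theorem~\ref{t1} provides, and you correctly flagged this. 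The paper's approach needs a weaker bound on the gradient-estimation error but pays for it with the supermartingale machinery and the extra step to pin down the limit.

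One small presentational point worth tightening: your summability of $\sum_s\gamma_s\|g_s-\nabla f(w_{s-1})\|$ via AM--GM is correct, but you could also argue directly by Cauchy--Schwarz that
\[
\Expect\Bigl[\sum_s\gamma_s\|g_s-\nabla f(w_{s-1})\|\Bigr]\leq\sum_s\gamma_s\sqrt{\Expect\|g_s-\nabla f(w_{s-1})\|^2}\leq\sum_s\frac{2}{s+1}\cdot\frac{LD(1+\delta)}{s+1}<\infty,
\]
which gives the a.s.\ finiteness in one line and matches the Jensen step already used in the proof of Lemma~\ref{l2}.
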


The theorem asserts that the objective value will converge to the true optimum under almost any realization while Theorem \ref{t1} tells we have convergence in expectation.

The proof relies on the martingale convergence theorem, which we recall here.

\begin{theorem}[Martingale convergence theorem]
	Let $\{X_t\}_{t=1}^n$ be a sequence of real random variables and $\Expect_s$ to be the conditional expectation conditional on all $X_i, i\leq s-1$, then if $X_t$ is a supermartingale, \ie,
\begin{equation*}
\begin{array}{l}
\Expect_s (X_s) \leq X_{s-1}
\end{array}
\end{equation*}
and for all $t$,
	$$X_t \geq L$$ for some $L$. Then there is a random variable $X$ that
	$$X_s \rightarrow X \quad \text{almost surely.}$$
\end{theorem}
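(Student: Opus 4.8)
The plan is to prove this by the classical route of Doob's upcrossing inequality, which uses only the supermartingale property together with the lower bound $X_t\geq L$; nothing deeper is needed. Throughout, $\Expect_t$ denotes conditional expectation given $X_1,\dots,X_{t-1}$, so that the supermartingale hypothesis reads $\Expect_t[X_t]\leq X_{t-1}$. Fix rationals $a<b$. For a finite horizon $N$, I would first define the number of upcrossings $U_N([a,b])$ of the interval $[a,b]$ by $X_1,\dots,X_N$ through the stopping times $\sigma_1=\inf\{t:X_t\leq a\}$, $\tau_1=\inf\{t>\sigma_1:X_t\geq b\}$, and recursively $\sigma_{k+1}=\inf\{t>\tau_k:X_t\leq a\}$, $\tau_{k+1}=\inf\{t>\sigma_{k+1}:X_t\geq b\}$; then $U_N([a,b])$ is the largest $k$ with $\tau_k\leq N$.

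Next I would prove the upcrossing inequality $(b-a)\,\Expect[U_N([a,b])]\leq \Expect[(a-X_N)^+]$ by the ``buy low, sell high'' argument. Introduce the $\{0,1\}$-valued process $H_t=\sum_{k\geq 1}\mathbf{1}\{\sigma_k<t\leq\tau_k\}$, which is predictable because $\{\sigma_k<t\}$ and $\{\tau_k\geq t\}$ are both measurable with respect to $X_1,\dots,X_{t-1}$. Since $H$ is nonnegative, bounded and predictable and $X$ is a supermartingale, the discrete integral $(H\cdot X)_N:=\sum_{t=2}^N H_t(X_t-X_{t-1})$ is again a supermartingale started at $0$, because $\Expect_t[H_t(X_t-X_{t-1})]=H_t(\Expect_t[X_t]-X_{t-1})\leq 0$; hence $\Expect[(H\cdot X)_N]\leq 0$. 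On the other hand, each completed upcrossing contributes at least $b-a$ to $(H\cdot X)_N$ and the possibly incomplete final excursion contributes at least $-(a-X_N)^+$, so pointwise $(H\cdot X)_N\geq (b-a)\,U_N([a,b])-(a-X_N)^+$. Taking expectations and rearranging gives the inequality.

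Now I would invoke the hypothesis $X_t\geq L$: it forces $(a-X_N)^+\leq (a-L)^+$, a constant, so $\Expect[U_N([a,b])]\leq (a-L)^+/(b-a)$ uniformly in $N$. As $U_N([a,b])$ increases to a limit $U_\infty([a,b])$ when $N\to\infty$, monotone convergence gives $\Expect[U_\infty([a,b])]<\infty$, hence $U_\infty([a,b])<\infty$ almost surely. Intersecting these almost-sure events over the countably many rational pairs $a<b$ yields a single almost-sure event $A$ on which $U_\infty([a,b])<\infty$ for all rational $a<b$ simultaneously. On $A$ the limit $\lim_t X_t$ exists in $[-\infty,\infty]$: otherwise $\liminf_t X_t<\limsup_t X_t$ and we could pick rationals $\liminf_t X_t<a<b<\limsup_t X_t$, forcing infinitely many upcrossings of $[a,b]$ and contradicting $U_\infty([a,b])<\infty$. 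Since $X_t\geq L$, the limit is at least $L$; and since $\Expect[X_t]$ is nonincreasing and bounded below, Fatou shows the limit is integrable, hence finite a.s. Setting $X:=\lim_t X_t$ on $A$ and $X:=0$ off $A$ produces the required random variable with $X_s\to X$ almost surely.

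The main obstacle is the upcrossing inequality itself --- in particular, checking carefully that $H\cdot X$ is a supermartingale (this is where predictability of $H$ and the tower property enter) and that the pointwise bound $(H\cdot X)_N\geq (b-a)U_N([a,b])-(a-X_N)^+$ holds in every case, including the one where the horizon $N$ falls in the middle of an excursion. Once the inequality is in hand, the passage to the limit and the extraction of the almost-sure limit are routine.
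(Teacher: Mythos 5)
The paper does not prove this statement; it is quoted as a standard background theorem (Doob's supermartingale convergence theorem) immediately before the proof of Theorem 5, so there is no in-paper proof to compare against. Your proof via the upcrossing inequality is the classical one and is correct: the predictability of the ``buy low, sell high'' process $H_t$ is checked properly, the pointwise bound $(H\cdot X)_N \geq (b-a)\,U_N([a,b]) - (a-X_N)^+$ covers the case of a truncated final excursion, the lower bound $X_t\geq L$ is used exactly where needed to make $\Expect[(a-X_N)^+]$ uniformly bounded, and the passage from finitely many upcrossings of every rational interval to almost-sure existence of the limit (finite by Fatou, since $\Expect[X_t-L]$ is nonincreasing and nonnegative) is standard. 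One minor stylistic note: the paper's statement indexes the sequence by $\{X_t\}_{t=1}^n$, which is a typo for an infinite sequence, and your proof correctly treats the infinite-horizon case.
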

To make the presentation clear, we first prove a simple lemma in constructing a martingale.

\begin{lemma} \label{l4}
	Suppose a sequence of random variables $\{X_s\}^\infty_{s=1}$ and a deterministic sequence $\{b_s\}_{s=1}^\infty$ satisfy $\Expect_{s}(X_s) \leq X_{s-1} +b_s$ and $X_s \geq L$ for some $L\in \reals$ for all $s$ with probability $1$. Furthermore, assume that $\sum_{s=1}^\infty b_s = C < \infty$. Then $X_s + a_s$ where $a_s = C - \sum_{i=1}^s b_s$ is a supermartingale.
\end{lemma}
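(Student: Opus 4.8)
The plan is to verify directly the defining inequality of a supermartingale, namely $\Expect_s(X_s + a_s) \le X_{s-1} + a_{s-1}$, where the deterministic shift $a_s = C - \sum_{i=1}^{s} b_i$ has been designed precisely to absorb the drift term $b_s$ appearing in the hypothesis $\Expect_s(X_s) \le X_{s-1} + b_s$. The one fact that makes everything go through is the telescoping identity $a_{s-1} - a_s = b_s$, which is immediate from the definition of $a_s$.

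First I would record that identity: $a_{s-1} - a_s = \left(C - \sum_{i=1}^{s-1} b_i\right) - \left(C - \sum_{i=1}^{s} b_i\right) = b_s$, equivalently $a_s + b_s = a_{s-1}$. Next, since $a_s$ is deterministic it comes out of the conditional expectation, so $\Expect_s(X_s + a_s) = \Expect_s(X_s) + a_s$. Applying the hypothesis $\Expect_s(X_s) \le X_{s-1} + b_s$ and then the identity gives $\Expect_s(X_s + a_s) \le X_{s-1} + b_s + a_s = X_{s-1} + a_{s-1}$, which is exactly the supermartingale property for the process $\{X_s + a_s\}$. (A pedantic remark worth including: $X_s$ is integrable in every use we make of the lemma since it is bounded below by $L$ and bounded above because $f$ is continuous on the compact set $\Omega$, so the conditional expectations are well defined.)

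Finally, to make the lemma immediately usable with the martingale convergence theorem quoted above, I would note the lower bound. In all our applications the drift terms satisfy $b_s \ge 0$ — they come from bounds of the form $LD^2\gamma_s^2(1+\delta)$ — so the partial sums $\sum_{i=1}^{s} b_i$ increase up to their limit $C$, whence $a_s \in [0,C]$; combined with $X_s \ge L$ this yields $X_s + a_s \ge L$. There is no genuine obstacle in this lemma: the content lies entirely in the choice of $a_s$, and the verification is the one-line telescoping computation above. The only things to be careful about are the indexing (reading the defining sum as $a_s = C - \sum_{i=1}^{s} b_i$) and, if one wants the boundedness-below conclusion stated in full generality rather than only the supermartingale property, observing that it requires the partial sums of the $b_s$ to stay at or below $C$.
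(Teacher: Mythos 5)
Your proof is correct and takes essentially the same route as the paper's: move $a_s$ to the right-hand side, use the telescoping identity $a_s + b_s = a_{s-1}$, and invoke the hypothesis $\Expect_s(X_s) \le X_{s-1} + b_s$. The additional remarks about integrability and the lower bound $X_s + a_s \ge L$ are sensible bookkeeping but do not change the argument.
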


\begin{proof}
	The condition $X_s\geq 0$ is mainly used so that all our expectations make sense.
	We need to show $\Expect _{s}  (X_s +a_s )\leq X_{s-1} +a_{s-1}.$ Now by moving $a_s$ to the RHS and use the definition of $a_s$, we see this inequality holds because of the assumption  $\Expect_{s}(X_s) \leq X_{s-1} +b_s$.
\end{proof}

We now prove Theorem \ref{t2}.
\begin{proof}[Proof of Theorem \ref{t2}]
	Recall that $\Expect_s$ denotes the conditional expectation given all the past except the realization of $s$. Using inequality \eqref{ineq1}, we see that
	\begin{equation*}
	\begin{array}{lll}
	\Expect_s f(w_s) & \leq& f(w_{s-1})+\gamma_s \nabla f(w_{s-1})^T(x^\star-w_{s-1}) \\
	 & &+\gamma_s\Expect_s[(\nabla f(w_{s-1})-g_s)^T(v_s-x^\star)]+\frac{LD^2(1+\delta)\gamma_s^2}{2} \nonumber.
	\end{array}
\end{equation*}
	By convexity, the term $\nabla f(w_{s-1})^T(x^\star-w_{s-1})$ is upper bounded by $f(x^\star) - f(w_{s-1})$, and Cauchy-Schwarz inequality yields that $|(\nabla f(w_{s-1})-g_s)^T(v_s-x^\star)| \leq D\|g_s -\nabla f(w_{s-1})\|$.  Since $\Expect_s[\|g_s -\nabla f(w_{s-1})\|]\leq \sqrt{ \Expect_s( \|g_s -\nabla f(w_{s-1})\|^2)}$ by Jensen's inequality. Using lemma \ref{l1}, we see that
	\begin{equation*}
	\begin{array}{ll}
\sqrt{\Expect_s[\|g_s -\nabla f(w_{s-1})\|^2]}
	&\leq  \sqrt{\frac{6L}{m_s}(2\Expect_s [f(w_{s-1})-f(x^\star)]+\Expect_s[f(w_0)-f(x^\star)])}\\
	&\leq \sqrt{ \frac{18LB}{m_s}}
	\end{array}
	\end{equation*}
	where $B = \sup_{x\in \Omega} f(x)-f(x^\star)$ which is finite as $\Omega$ is compact.

	Using all previous inequalities, we see that
	\begin{equation*}
	\begin{array}{ll}
		\Expect_s [f(w_s) -f(x^\star)]
	&\leq (1- \gamma_s) (f(w_{s-1}) -f(x^\star)) + \frac{LD^2(1+\delta)\gamma_s^2}{2} + \gamma_s\sqrt{\frac{18LB}{m_s}}				\\
	&\leq f(w_{s-1}) -f(x^\star) + \frac{LD^2(1+\delta)\gamma_s^2}{2} + \gamma_s\sqrt{\frac{18LB}{m_s}}
	\end{array}
	\end{equation*}

	Since by our choice of $\gamma_s$ and $m_s$, we know that by letting $b_s =  \frac{LD^2\gamma_s^2}{2} + \gamma_s\sqrt{\frac{18LB}{m_s}}, X_s = f(w_s)- f(x^\star) \geq 0, a_s = \sum_{i=s+1}^\infty b_s$, the condition of Lemma \ref{l4} is satisfied and thus $X_s + a_s$ is indeed a super martingale.

	Now using the martingale convergence theorem, we know that $X_s +a_s$ converges to a certain random variable $X$. Since $a_s \rightarrow 0$ as $s\rightarrow \infty$, $X_s \rightarrow X$ almost surely. $X_s \geq 0$ then implies $X\geq 0$.
	But $\Expect X \leq \Expect X_s +a _s$ for any $s$ by the supermartingale property.
	Because $\Expect X_s \rightarrow 0$  by Theorem \ref{t1} and $a_s\rightarrow 0$,
	$\Expect X \leq \Expect X_s +a _s$ implies $\Expect X \leq 0$. Combine the fact $X\geq 0$ as we just argued, we see $X=0$.
	This shows that $X_s \rightarrow 0$ almost surely which is what we need to prove.
\end{proof}

The reason that the above argument does not work for Algorithm \ref{svrf} is that once in a while we restart $k$ and the sequence $b_s$ we used above will be abandoned. More precisely, since the martingale convergence theorem does not tell when the sequence is about to converge, within $t$th cycle of $k\in \{N_t+1,\dots, N_{t+1}\}$, we don't know whether the sequence $f(w_s)$ has converged or not. When we enter a new cycle, we start fresh from $k=1$ with a new $b_s$.
By contrast, for Algorithm \ref{svrf2}, we know that $k$ is always increasing and we have only one sequence $b_s$.
This observation explains why Algorithm \ref{svrf2} is likely to be more stable.

\section{\SSVRF}

In previous sections, we have seen how to augment the standard Frank Wolfe algorithm with
\begin{itemize}
	\item an approximate oracle for linear optimization subproblem \eqref{sp1},
	\item stochastic variance reduced gradients.
\end{itemize}
Now we turn our attention to the third challenge we raised in the introduction,
restricting our attention to the case where the decision variable $X \in \reals^{m \times n}$ is a matrix:
what if storing the decision variable $X$ is also costly?

Of course, if the decision variable at the solution has no structure, there is no hope to store it more cheaply:
in general, $m\times n$ space is required simply to output the solution to the problem.
However, in many settings $X$ at the solution may enjoy a low rank structure:
$X$ at the solution can be well approximated by a low rank matrix.

The idea introduced in \cite{sketch} is designed to capture this low rank structure.
It forms a linear sketch of the column and row spaces of the decision variable $X$,
and then uses the sketched column and row spaces to recover the decision variable.
The recovered decision variable approximates the original $X$ well if a low rank structure is present.

The advantage of this procedure in the context of optimization
is that the decision variable $X$ may not be low rank
at every iteration of the algorithm. However, so long as
the \emph{solution} is (approximately) low rank, we can use this procedure to
sketch the decision variable and to recover the solution from this sketch,
as introduced in \cite{sketchy}.
Notably, we need not store the entire decision variable at each iteration, but
only the sketch. Hence the memory requirements of the algorithm are substantially reduced.

Specifically, the sketch proposed in \cite{sketch} is as follows.
To sketch a matrix $X \in \reals^{m\times n}$, draw two matrices with independent normal  entries
$\Psi \in \reals^ {n \times k}$ and $\Phi \in \reals ^{l\times m}$.
We use $Y^C$ and $Y^R$ to capture the column space  and the row space  of $X$:
\begin{align}
Y^C = X\Psi \in \reals^{m \times k},\qquad  Y^R =\Phi X \in \reals^{l\times n} . \label{sk1}
\end{align}
In the optimization setting of matrix completion with Algorithm \ref{fw1}, we do not observe the matrix $X$ directly.
Rather, we observe a stream of rank one updates
\[ X \leftarrow \beta_1 X+ \beta_2 uv^T,\] where $\beta_1,\beta_2$ are real scalars.
In this setting, $Y^C$ and $Y^R$ can be updated as
\begin{align}
Y^C \leftarrow \beta_1Y^C+ \beta_2 u v^T \Psi  \in \reals^{m \times k },\quad  Y^R \leftarrow  \beta_1Y^R + \beta_2 \Phi uv^T \in \reals^{l\times n} . \label{sk3}
\end{align}
This observation allows us to form the sketch $Y^C$ and $Y^R$ from the stream of updates.

We then reconstruct $X$ and get the reconstructed matrix $\hat{X}$ by
\begin{align}
\label{re}
Y^C = QR, \quad  B = (\Phi Q)^{\dagger} Y^R, \quad \hat{X}= Q[B]_r,
\end{align}
where $QR$ is the $QR$ factorization of $Y^C$ and $[\cdot]_r$ returns the best rank $r$ approximation in Frobenius norm.
Specifically, the best rank $r$ approximation of a matrix $Z$ is $U\Sigma V^T$,
where $U$ and $V$ are right and left singular vectors corresponding to the $r$
largest singular values of $Z$ and $\Sigma$ is a diagonal matrix with $r$ largest singular values of $Z$. Note the matrix $R$ is not used.

The following theorem \cite[Theorem 5.1]{sketch} guarantees that the resulting reconstruction
approximates $X$ well if $X$ is approximately low rank.

\begin{theorem} \label{sk2}
	Fix a target rank $r$. Let $X$ be a matrix, and let $(Y^C,Y^R)$ be a sketch as described in equation \eqref{sk1}.
	The procedure \eqref{re} yields a rank-$r$ matrix $\hat{X}$ with
	\[\Expect \|X-\hat{X}\|_F \leq 3\sqrt{2} \|X-[X]_r\|_F.\]
\end{theorem}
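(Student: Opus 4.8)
The plan is to decompose the reconstruction error $\|X - \hat X\|_F$ through an intermediate object that separates the effect of the column-space sketch from the effect of the row-space sketch. Writing $P = QQ^T$ for the orthogonal projector onto the range of $Y^C = X\Psi$, the key identity is that the reconstruction \eqref{re} produces $\hat X = Q[B]_r$ where $B = (\Phi Q)^\dagger Y^R = (\Phi Q)^\dagger \Phi X$. I would first record the exact low-rank truncation: since $[B]_r$ is the best rank-$r$ approximation of $B$ and $Q$ has orthonormal columns, $Q[B]_r$ is the best rank-$r$ approximation of $QB$ within the range of $Q$; and $QB = Q(\Phi Q)^\dagger \Phi X$ is exactly the output of a one-sided sketch-and-project reconstruction applied to $PX$. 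The first step is therefore to establish that $\|X - Q[B]_r\|_F \le \|X - QB\|_F + \|QB - [QB]_r\|_F$ is not quite the right split; instead I would use that $Q[B]_r = [QB]_r$ (best rank-$r$ approximation in the range of $Q$ equals the global one since $QB$ already has range in $Q$), and then bound $\|X - [QB]_r\|_F$.

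The second step is the heart of the argument: control $\|X - QB\|_F$, where $QB = Q(\Phi Q)^\dagger \Phi X$ is a random projection-type approximation to $X$ using the row sketch $\Phi$, restricted to act only after projecting onto $\mathrm{range}(Q)$. I would invoke (or reprove from \cite{sketch}) the standard bound for the randomized range finder: since $\Psi$ is Gaussian with $k$ columns and $\Phi$ is Gaussian with $l$ rows, with $l > k$, the two-sided reconstruction satisfies $\Expect_\Phi \|X - Q(\Phi Q)^\dagger \Phi X\|_F \le c_1 \Expect_\Psi \|X - PX\|_F$ for an absolute constant $c_1$, conditional on $Q$; and then $\Expect_\Psi \|X - PX\|_F \le c_2 \|X - [X]_r\|_F$ is the classical Halko–Martinsson–Tropp Gaussian range-finder bound. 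Combining these and then handling the extra rank-$r$ truncation — using that $\|X - [QB]_r\|_F \le \|X - QB\|_F + \|QB - [QB]_r\|_F$ and $\|QB - [QB]_r\|_F \le \|QB - [X]_r\|_F \le \|QB - X\|_F + \|X - [X]_r\|_F$ since $[X]_r$ has rank $r$ — multiplies the constant by a further factor of $2$ and adds one more copy of $\|X - [X]_r\|_F$. Tracking the constants through (with the specific oversampling implicit in \cite{sketch}) yields the stated $3\sqrt 2$.

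I expect the main obstacle to be the bookkeeping with the two independent Gaussian matrices and the conditional expectations: one must be careful that $Q$ depends on $\Psi$ only, so that the $\Phi$-expectation can be taken first with $Q$ fixed, and then the $\Psi$-expectation, invoking Jensen where needed to move expectations inside norms. A second delicate point is the rank-$r$ truncation step — naively $[QB]_r$ need not be close to $X$ just because $QB$ is, so the triangle-inequality argument above (comparing against the rank-$r$ matrix $[X]_r$, which lies in a space not necessarily contained in $\mathrm{range}(Q)$) has to be done in the right order. Since Theorem \ref{sk2} is quoted verbatim as \cite[Theorem 5.1]{sketch}, I would ultimately cite that source for the precise constant rather than re-deriving the sharp $3\sqrt2$, and present the decomposition above only to make the structure of the bound transparent to the reader.
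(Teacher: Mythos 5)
The paper does not actually prove Theorem~\ref{sk2}; it states the result and cites \cite[Theorem 5.1]{sketch} verbatim, so your decision to ultimately defer to that source for the precise constant is exactly what the paper does. Your supplementary outline is structurally on target: the two ingredients really are the Gaussian range-finder error $\|(I-QQ^T)X\|_F$ (expectation over $\Psi$) and the error of the oblique projection $Q(\Phi Q)^\dagger\Phi X$ (expectation over $\Phi$ conditional on $Q$), and your observation that $Q[B]_r = [QB]_r$ because $Q$ has orthonormal columns is correct and useful. Two caveats, though. First, the pure triangle-inequality split $\|X-[QB]_r\|_F \le 2\|X-QB\|_F + \|X-[X]_r\|_F$ provably loses constants: chasing it through with the HMT bounds gives roughly $5\|X-[X]_r\|_F$ rather than $3\sqrt 2$. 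The sharper argument in \cite{sketch} instead uses the exact orthogonal (Pythagorean) decomposition
\[
\|X - Q[B]_r\|_F^2 = \|(I-QQ^T)X\|_F^2 + \|QQ^TX - Q[B]_r\|_F^2,
\]
which holds because $Q[B]_r$ lies in $\mathrm{range}(Q)$, and then handles the rank-$r$ truncation on the second term by a more careful comparison than a naive triangle inequality. Second, the constant $3\sqrt 2$ is not universal: it is tied to the specific oversampling $k = 2r+1$ and $l = 4r+3$ (which the paper uses in Algorithm~\ref{sketchcgm} but leaves implicit in the theorem statement). Your sketch invokes generic $k,l$ with $l>k$ and quotes $3\sqrt 2$; you should record the exact dimensions, since for other choices of $k,l$ the stated bound fails.
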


In the paper \cite{sketchy}, this matrix sketching procedure is combined with the original Frank-Wolfe (Algorithm \ref{fw}).
We show here that it also works well with \SVRF, the stochastic version of Frank-Wolfe
and an approximate subproblem oracle.

We use the following matrix completion problem, which is also a particular instance of Problem \eqref{op3},  to illustrate this synthesis:

\beq
\ba{ll} \label{op1}
\mbox{minimize} & f(\mymathcal{A}W):=\frac{1}{d}\sum_{i \in I } f_{i}(\mymathcal{A}W) \\
\mbox{subject to} &  \|W\|_* \leq \alpha, \\
\ea
\eeq
where $d = |I|$ is the number of elements in $I$, $W\in \reals ^{m\times n}$,
$\mymathcal{A}: \reals^{m\times n} \rightarrow \reals^{l}$ is a linear map,
and $\alpha >0$ is a given constant. By setting $f = \sum_{i\in I} f_i$ and $\mymathcal{S}=\reals^{m\times n}$, we see it is indeed a special instance of Problem \eqref{op3}.
Since \SVRF applied to problem \eqref{op1}
updates iterates $W_k$ with a rank-one update at each inner loop iteration,
the sketch matrices $Y^C$ and $Y^R$ can be updated using equation \eqref{sk3}.
In order to compute the gradient $\nabla (f\circ \mymathcal{A})(W_k)$ at $W_k$,
we can store the dual variable $z_k =\mymathcal{A}W_k$
and compute the gradient from $z_k$ as
\[
\nabla (f\circ \mymathcal{A})(W_k)= \mymathcal{A}^*(\nabla f)(z_k).
\]
Using linearity of $\mymathcal{A}$,  the dual variable can be updated as
$$z_k := (1-\gamma_k)z_{k-1}+\gamma_k \mymathcal{A}(-\alpha u_kv_k^*).$$
We can store the dual variable efficiently if $l=\mymathcal{O}(n)$, and
we can update it efficiently if the cost of applying $\mymathcal{A}$ to
a rank one matrix is $\mymathcal{O}(l)$. 
In many settings we have $l = d$, the number of samples.
This means that storing and updating the dual variable $z_k$ could be
as costly as computing the full gradient.
However, in the oversampled setting,
where $l =\mymathcal{O}(n)$ while $d\gg \mymathcal{O}(n)$, 
combining the techniques can be beneficial.
In this setting, storing $z_k$ is not too costly,
and updating $z_k$ is also efficient
so long as applying $\mymathcal{A}$ to a matrix costs $\mymathcal{O}(l)$.

The combined algorithm, \SSVRF, is shown below as Algorithm \ref{sketchcgm}.
\begin{algorithm}
	\caption{\SSVRF \label{sketchcgm}}
	\begin{algorithmic}[1]
		\STATE {\bf Input:} Objective function $f\circ \mymathcal{A} = \frac{1}{d} \sum_{i=1}^d f_i\circ \mymathcal {A}$
		\STATE{\bf Input:} Stepsize $\gamma_k$, mini-batch size $m_k$, epoch length $N_t$ and tolerance sequence $\epsilon_k$
		\STATE{\bf Input:} Target rank $r$ and maximum number of iteration $T$
		\STATE {\bf Initialize:} Set $x_{-1} =0,Y^C=0,Y^R=0$ and draw $\Phi\in \reals^{(4r+3)\times m},$ $\Psi \in \reals^{n\times (2r+1)}$ with standard normal entries.
		\FOR{$t=1,2, \ldots, T$}
		\STATE Take a snapshot $z_0 = x_{t-1}$ and compute gradient $\nabla f(z_0)$
		\FOR{$k=1$ to $N_t$}
		\STATE Compute $\tilde{\nabla}_k$, the average of $m_k$ iid samples of     $\tilde{\nabla}f(z_{k-1},z_0)$
		\STATE Compute  $u,v$ such that
		\STATE $ -\alpha \tr ((\mymathcal{A}^*\tilde{\nabla }_k)^Tuv^T) \leq \min_{\|X\|_*\leq \alpha}\tr((\mymathcal{A}^* \tilde{\nabla}_k )^T X)+\epsilon_k$  \label{maxsing}
		\STATE	Compute $h _k= \mymathcal{A}(-\alpha uv^T)$
		\STATE Update $z_k := (1-\gamma_k)z_{k-1}+\gamma_k h_k$
		\STATE Update $Y^C_k = (1-\gamma_k)Y^C_{k-1}+\gamma_k (-\alpha u v^T)\Psi$
		\STATE Update $Y^R_k=(1-\gamma_k)Y^R_{k-1}+\gamma_k\Phi (-\alpha u v^T) $
		\ENDFOR
		\STATE Set $x_t = z_{N_t}$
		\ENDFOR
		\STATE Compute $QR$ factorization of the $Y_{N_T}^C=QR$ and compute  $B = (\Phi Q)^{\dagger} Y_{N_T}^R$
		\STATE Compute the top $r$ many left and right singular vectors $U,V$ of $B$ and the diagonal matrix $\Sigma$ with top $r$ singular values.
		\STATE {\bf Output:} $(U,\Sigma,V)$.
	\end{algorithmic}

\end{algorithm}

\section{Theoretical Guarantees for \SSVRF}

The following theorems are analogous to theorems in \cite{sketchy}.
In this work, we introduce adaptations to cope with the approximate oracle and stochastic gradient.

Let us first instantiate some definitions.
We assume for each $i$, $f_i\circ \mymathcal{A}$ is $L$-smooth with respect to the Frobenius norm.
Note that the diameter of the feasible region is bounded:
\[
\sup_{\|X\|_*,\|Y\|_* \leq \alpha } \|X-Y\|_F \leq  \sup_{\|X\|_*,\|Y\|_* \leq \alpha } \|X-Y\|_* \leq 2\alpha.
\]
Hence the parameter $D$, the diameter of the feasible set in Theorem \ref{t1}, can be replaced by $2\alpha$.
For each $t$, we denote by $\hat{X}_t$ the matrix reconstructed using $Y^C_{N_t},Y^R_{N_t}$:
\begin{align*}
Y^C_{N_t} = QR, \quad  B = (\Phi Q)^{\dagger} Y^R_{N_t}, \quad \hat{X_t}= Q[B]_r.
\end{align*}
The matrix $\hat{X}_t$ can be considered as the reconstruction of $X_t$
(the snapshot, not the inner loop iterate) in \SSVRF.
We use the same parameters as in Theorem \ref{t1} with $D$ replaced by $2\alpha$ to achieve the following theoretical guarantee:

\begin{theorem}
	Suppose we apply Algorithm \ref{svrf} or \ref{svrf2} to the optimization problem \eqref{op1}
	and that for a particular realization of the stochastic gradients,
	the iterates $X_t$ converge to a matrix $X_\infty$.
	Further suppose that in Algorithm \ref{sketchcgm}, we use the same stochastic gradients.

	Then
	\[
	\lim_{t\rightarrow \infty} \Expect_{\Psi,\Phi}\|\hat{X}_t-X_\infty\|_F \leq 3\sqrt{2} \|X_\infty - [X_\infty]_r\|_F.
	\]
\end{theorem}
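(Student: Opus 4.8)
The plan is to reduce the claim to the reconstruction guarantee of Theorem~\ref{sk2}. The crucial observation is that the pair $(Y^C_{N_t},Y^R_{N_t})$ maintained inside Algorithm~\ref{sketchcgm} is \emph{exactly} the sketch \eqref{sk1} of the snapshot iterate $X_t$ produced by \SVRF. Granting this, Theorem~\ref{sk2} applied to $X_t$ controls $\Expect_{\Psi,\Phi}\|X_t-\hat X_t\|_F$, and a triangle inequality together with the assumed convergence $X_t\to X_\infty$ and the continuity of the best rank-$r$ residual finishes the argument.

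First I would establish the sketch identity. Run Algorithm~\ref{svrf} (or~\ref{svrf2}) on Problem~\eqref{op1}, driven by the \emph{same} stochastic gradients and the \emph{same} approximate-oracle outputs as Algorithm~\ref{sketchcgm} and started from the same (zero) initial iterate. Since the linear-optimization subproblem over $\{\|X\|_*\le\alpha\}$ is solved by a rank-one matrix $V_k=-\alpha uv^T$ (the matrix implicit in Line~\ref{maxsing}), the inner-loop iterates of \SVRF satisfy $W_k=(1-\gamma_k)W_{k-1}+\gamma_k V_k$ and the snapshots are $X_t=W_{N_t}$. Because $\Psi$ and $\Phi$ are drawn once at initialization and the maps $M\mapsto M\Psi$ and $M\mapsto\Phi M$ are linear, an induction over the inner iterations — carried across the outer epochs, since neither $W_k$ nor $(Y^C,Y^R)$ is reset between them — gives $Y^C_{N_t}=X_t\Psi$ and $Y^R_{N_t}=\Phi X_t$ for every $t$. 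Hence the matrix $\hat X_t$ formed from $(Y^C_{N_t},Y^R_{N_t})$ via \eqref{re} is precisely the reconstruction of $X_t$ in the sense of Theorem~\ref{sk2}.

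Next I would invoke Theorem~\ref{sk2} with $X=X_t$ and target rank $r$, obtaining $\Expect_{\Psi,\Phi}\|X_t-\hat X_t\|_F\le 3\sqrt{2}\,\|X_t-[X_t]_r\|_F$, and then apply the triangle inequality, using that $X_t$ and $X_\infty$ are deterministic once the stochastic gradients are fixed:
\[
\Expect_{\Psi,\Phi}\|\hat X_t-X_\infty\|_F\ \le\ 3\sqrt{2}\,\|X_t-[X_t]_r\|_F+\|X_t-X_\infty\|_F .
\]
Finally I would pass to the limit: the residual map $M\mapsto\|M-[M]_r\|_F=\big(\sum_{i>r}\sigma_i(M)^2\big)^{1/2}$ is continuous, since singular values are $1$-Lipschitz in the operator norm by Weyl's inequality, so $X_t\to X_\infty$ forces $\|X_t-[X_t]_r\|_F\to\|X_\infty-[X_\infty]_r\|_F$ while $\|X_t-X_\infty\|_F\to 0$. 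Taking $\limsup_{t\to\infty}$ in the displayed inequality yields the asserted bound.

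The hard part is not any estimate but two points of bookkeeping. The first is making the sketch identity rigorous: one must spell out that \SSVRF and \SVRF consume identical randomness — the stochastic gradients \emph{and} the approximate linear-optimization oracle — so that the rank-one updates coincide, and that the sketch recursion is never reset between epochs, so that the linear invariant $Y^C_{N_t}=X_t\Psi$, $Y^R_{N_t}=\Phi X_t$ propagates through all $t$. The second is that the reconstruction map \eqref{re} (a $QR$ step, a pseudoinverse, and a truncated SVD) is not continuous, so $\hat X_t$ itself need not converge; this is why the natural conclusion is a $\limsup$ bound rather than a statement about $\lim_t \hat X_t$, and it is this $\limsup$ bound that the theorem records.
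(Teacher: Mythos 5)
Your argument is correct. The paper does not actually spell out a proof for this theorem: it simply declares that the proof ``exactly follows the proof of [Theorem 6, sketch]'' and moves on. Your reconstruction supplies the missing content, and it is the natural (and almost certainly the intended) route: establish the coupling invariant $Y^C_{N_t}=X_t\Psi$, $Y^R_{N_t}=\Phi X_t$ by induction using the linearity of $M\mapsto M\Psi$, $M\mapsto\Phi M$ and the fact that \SSVRF and \SVRF, fed the same stochastic gradients and oracle outputs, produce the same rank-one updates; then apply Theorem~\ref{sk2} at $X_t$ to get $\Expect_{\Psi,\Phi}\|X_t-\hat X_t\|_F\le 3\sqrt{2}\,\|X_t-[X_t]_r\|_F$; finally use the triangle inequality and continuity of $M\mapsto\|M-[M]_r\|_F$ (Weyl, $1$-Lipschitz singular values) to pass to the limit. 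Two remarks you make are genuinely useful supplements to the paper's terse treatment: the coupling argument requires the \emph{approximate-oracle outputs}, not only the stochastic-gradient draws, to be shared (the theorem statement mentions only the latter, which implicitly presumes a deterministic oracle), and the correct conclusion from this argument is a $\limsup$ bound, since the reconstruction map \eqref{re} is discontinuous and $\hat X_t$ need not converge; the displayed $\lim$ should be read that way.
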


\begin{proof}
	The proof exactly follows the proof of \cite[Theorem 6]{sketch}.
\end{proof}

When the solution set of optimization problem \eqref{op1} contains only matrices
with rank $\leq r$,
we can prove a stronger guarantee for Algorithm \ref{sketchcgm}:

\begin{theorem}
	Suppose that the solution set $S_*$ of the optimization problem \eqref{op1} contains
	only matrices with rank $\leq r$.
	Then Algorithm \ref{sketchcgm} attains
	\[
	\lim_{t\rightarrow \infty}\Expect \dist_F(\hat{X}_t,S_*) = 0,
	\]
	where $\dist_F(X,S_*) = \inf_{Y\in S_*} \|X-Y\|_F$.
\end{theorem}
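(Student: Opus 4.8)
The plan is to combine the in-expectation objective convergence furnished by Theorem \ref{t1} with the sketch reconstruction guarantee of Theorem \ref{sk2}, exploiting the hypothesis that every element of $S_*$ has rank at most $r$ to turn the reconstruction error bound into a distance-to-$S_*$ bound.

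First I would note that the (implicit) decision variables $W_k$ in Algorithm \ref{sketchcgm}, and hence the snapshots $X_t=W_{N_t}$, evolve exactly as the \SVRF iterates of Algorithm \ref{svrf} or \ref{svrf2} applied to problem \eqref{op1}: the quantities $z_k,Y^C,Y^R$ are auxiliary bookkeeping, the vector $\mymathcal{A}^*\tilde{\nabla}_k$ is precisely the minibatched variance-reduced gradient of $f\circ\mymathcal{A}$ at $W_{k-1}$, and line \ref{maxsing} is the approximate-oracle linear subproblem over $\{\|W\|_*\le\alpha\}$. Consequently Theorem \ref{t1}, applied with the diameter $D$ replaced by $2\alpha$, gives $\Expect[f(\mymathcal{A}X_t)-f(\mymathcal{A}W^\star)]\to 0$ as $t\to\infty$, where $W^\star\in S_*$ is any minimizer of \eqref{op1}.

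Next I would pass from $X_t$ to its reconstruction $\hat{X}_t$. Conditioning on the stochastic-gradient randomness, the iterate $X_t$ is fixed and, crucially, independent of the sketching matrices $\Phi,\Psi$ drawn at initialization, so Theorem \ref{sk2} applies with $X=X_t$ held fixed and yields $\Expect_{\Phi,\Psi}\|X_t-\hat{X}_t\|_F\le 3\sqrt{2}\,\|X_t-[X_t]_r\|_F$. Since $S_*$ is a nonempty closed subset of the compact feasible set, the infimum defining $\dist_F(X_t,S_*)$ is attained at some $Y_t\in S_*$ with $\rank(Y_t)\le r$; because $[X_t]_r$ is the best rank-$r$ Frobenius approximation of $X_t$, this gives $\|X_t-[X_t]_r\|_F\le\|X_t-Y_t\|_F=\dist_F(X_t,S_*)$. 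Combining with the triangle inequality $\dist_F(\hat{X}_t,S_*)\le\|\hat{X}_t-X_t\|_F+\dist_F(X_t,S_*)$, taking $\Expect_{\Phi,\Psi}$, and then taking the outer expectation over the gradient randomness via the tower property, I obtain $\Expect\,\dist_F(\hat{X}_t,S_*)\le(1+3\sqrt{2})\,\Expect[\dist_F(X_t,S_*)]$.

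It then remains to show $\Expect[\dist_F(X_t,S_*)]\to 0$, and I expect this to be the main obstacle: the objective suboptimality $f(\mymathcal{A}X_t)-f(\mymathcal{A}W^\star)$, which is all Theorem \ref{t1} controls, need not dominate $\dist_F(X_t,S_*)$ without a strong-convexity or sharpness assumption. The fix is compactness. Define the modulus $\omega(\eta)=\sup\{\dist_F(W,S_*):\|W\|_*\le\alpha,\ f(\mymathcal{A}W)-f(\mymathcal{A}W^\star)\le\eta\}$; it is finite, nondecreasing, and $\omega(\eta)\downarrow 0$ as $\eta\downarrow 0$, since otherwise a sequence of near-minimizers realizing the supremum would, by compactness of the feasible set and continuity of $f\circ\mymathcal{A}$, have a subsequential limit that is a minimizer yet lies at positive distance from $S_*$, a contradiction. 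For any $\eta>0$, splitting $\Expect[\dist_F(X_t,S_*)]$ over the events $\{f(\mymathcal{A}X_t)-f(\mymathcal{A}W^\star)\le\eta\}$ and its complement, bounding $\dist_F$ by the diameter $2\alpha$ on the bad event, and applying Markov's inequality (valid since $f(\mymathcal{A}X_t)-f(\mymathcal{A}W^\star)\ge 0$), gives $\Expect[\dist_F(X_t,S_*)]\le\omega(\eta)+2\alpha\,\Expect[f(\mymathcal{A}X_t)-f(\mymathcal{A}W^\star)]/\eta$. Letting $t\to\infty$ and then $\eta\to 0$ yields $\limsup_{t\to\infty}\Expect[\dist_F(X_t,S_*)]=0$, which together with the bound of the previous paragraph completes the proof. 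Aside from this step, the only points needing care are the conditioning argument that licenses invoking Theorem \ref{sk2} with $X_t$ fixed, and the verification that the \SSVRF snapshots genuinely inherit the guarantee of Theorem \ref{t1}.
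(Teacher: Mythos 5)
Your proof is correct and takes essentially the same approach as the paper: triangle-inequality decomposition, then Theorem~\ref{sk2} combined with the rank bound on $S_*$ to control $\Expect\|\hat{X}_t-X_t\|_F$ by $\Expect\dist_F(X_t,S_*)$, then a compactness-plus-Markov argument to send $\Expect\dist_F(X_t,S_*)\to 0$. Your modulus $\omega(\eta)$ and the paper's excluded-set level $v=\inf\{g(X):X\in E\}$ are dual parameterizations of the same quantitative compactness fact, so the core mechanism is identical; your added remarks on the conditioning that licenses Theorem~\ref{sk2} and on identifying the \SSVRF snapshots with the \SVRF iterates are sound and make explicit what the paper leaves implicit.
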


\begin{proof}
	The triangle inequality implies that
	\begin{equation*}
	\begin{array}{l}
	\Expect\dist_F(\hat{X}_t,S_*)  \leq \Expect\|\hat{X}_t -X_t\|_F +\Expect \dist_F(X_t,S_*).
	\end{array}
	\end{equation*}
	We claim that the second term, $\Expect \dist_F(X_t,S_*)$, converges to $0$.
	If so, we may conclude that the first term converges to zero by the following inequality.
	\begin{equation*}
	\begin{array} {ll}
	\Expect \|\hat{X}_t - X_t\|_F &\leq 3\sqrt{2} \Expect \|X_t - [X_t]_r\|_F\\
	& \leq 3\sqrt{2} \Expect(\dist_F(X_t,S_*)) \rightarrow 0.
	\end{array} 
	\end{equation*}
	The first inequality is Theorem \ref{sk2}, and the second bound is due to the optimality of $[X_t]_r$.

	It remains only to prove the claim $\Expect \dist_F(X_t,S_*) \to 0$.
	Let $g= f \circ \mymathcal{A}$ and $g_*$ to be the optimal value of $g$ in program \eqref{op1}.
	Now fix a number $\epsilon>0$.
	Define
	\begin{equation*}
	\begin{array}{l}
	E = \{ X\in \reals ^{m\times n}: \|X\|_* \leq \alpha \;\text{and}\; \dist_F (X,S_*) \geq \epsilon\},
	\end{array} 
	\end{equation*}
	and $v = \inf \{ g(X), X\in E\}$.
	If $E$ is empty, then $v = +\infty$.
	Otherwise, the continuous function $g$ attains the value $v$ on the compact set $E$.
	In either case, $v>g_*$ because $E$ contains no optimal point of \eqref{op1}.
	Thus
	\[
	\Prob(X_t\in E)  \leq \Prob(g(X_t) -g_* > v-g^*)\leq \frac{\Expect(g(X_t) -g^*)}{v-g^*},
	\]
	where the first inequality is due to the optimality of $v$, and the second is just the Markov inequality.
	Notice
	\begin{equation*}
	\begin{array}{ll}
	\Expect \dist_F(X_t,S_*) & = \Expect \dist_F(X_t,S_*) \indicator_{\{X_t \in E\} } + \Expect \dist_F(X_t,S_*) \indicator_{\{X_t \notin E\} } \\&\leq 2\alpha \Prob(X_t \in E )+\epsilon
	\\ & \leq 2\alpha\frac{\Expect(g(X_t) -g^*)}{v-g^*} +\epsilon,
	\end{array}{ll}
	\end{equation*}
	where the inequality is due to the definition of $E$, and the feasible region is $\|X\|_* \leq \alpha$.
	Since $\Expect(g(X_t) )\rightarrow g_*$ by Theorem \ref{t1}, we know
	$\lim_{t\rightarrow\infty}\Expect \dist_F(X_t,S_*) \leq \epsilon$ for any $\epsilon >0$.
	Thus the claim is proved.
\end{proof}

When the solution to the optimization problem \eqref{op1} is unique and the function $f$ has a strong curvature property,
we can also bound the distance to the optimal solution in expectation.

\begin{theorem}
	Fix $\kappa>0$ and $\nu \geq 1$. Suppose the unique solution $X^\star$ of \eqref{op1} has rank less than or equal to $r$ and
	\begin{align}
	f(\mymathcal{A}X) - f(\mymathcal{A}X^\star) \geq\kappa \|X-X^\star\|_F^\nu \label{st1}
	\end{align}
	for all $\|X\|_*\leq \alpha$. Then we have the error bound
	\[\Expect \|\hat{X}_t - X^\star\|_F \leq 6\Big(\frac{4\kappa^{-1}L\alpha^2(1+\delta)}{2^{t+1}}\Big)^{\frac{1}{v}}\]
	for all $t$.
\end{theorem}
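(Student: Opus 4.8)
The plan is to combine the sketch reconstruction guarantee with the curvature assumption and the convergence rate for \SSVRF. First I would invoke Theorem~\ref{sk2} applied to the snapshot iterate $X_t$ (which is exactly the matrix whose sketch $Y^C_{N_t},Y^R_{N_t}$ we maintain), together with the fact that $X^\star$ has rank $\leq r$, so that $\|X_t-[X_t]_r\|_F \leq \|X_t-X^\star\|_F$ by optimality of the best rank-$r$ approximation. This gives
\[
\Expect_{\Psi,\Phi}\|\hat{X}_t-X_t\|_F \leq 3\sqrt{2}\,\|X_t-[X_t]_r\|_F \leq 3\sqrt{2}\,\|X_t-X^\star\|_F,
\]
and hence by the triangle inequality $\Expect\|\hat{X}_t-X^\star\|_F \leq (1+3\sqrt{2})\Expect\|X_t-X^\star\|_F \leq 6\Expect\|X_t-X^\star\|_F$ (using $1+3\sqrt{2} < 6$).

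Next I would translate the function-value convergence into a bound on $\Expect\|X_t-X^\star\|_F$. The curvature hypothesis \eqref{st1} with $X = X_t$ (which is feasible, $\|X_t\|_*\leq\alpha$) gives $\|X_t-X^\star\|_F^\nu \leq \kappa^{-1}\big(f(\mymathcal{A}X_t)-f(\mymathcal{A}X^\star)\big)$. Taking expectations and applying Jensen's inequality to the concave map $u\mapsto u^{1/\nu}$ (valid since $\nu\geq 1$), we get
\[
\Expect\|X_t-X^\star\|_F \leq \Big(\Expect\|X_t-X^\star\|_F^\nu\Big)^{1/\nu} \leq \Big(\kappa^{-1}\Expect\big[f(\mymathcal{A}X_t)-f(\mymathcal{A}X^\star)\big]\Big)^{1/\nu}.
\]
Then I would plug in Theorem~\ref{t1}, with $D$ replaced by $2\alpha$ as noted in the preceding discussion, so that $\Expect[f(\mymathcal{A}X_t)-f(\mymathcal{A}X^\star)] \leq \frac{L(2\alpha)^2(1+\delta)}{2^{t+1}} = \frac{4L\alpha^2(1+\delta)}{2^{t+1}}$, yielding
\[
\Expect\|X_t-X^\star\|_F \leq \Big(\frac{4\kappa^{-1}L\alpha^2(1+\delta)}{2^{t+1}}\Big)^{1/\nu}.
\]
Chaining this with the factor $6$ from the sketch step produces the claimed bound.

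The only real subtlety — and the step I would be most careful about — is the interchange of the two expectations (over the stochastic gradients and over the sketch matrices $\Psi,\Phi$) and the application of Jensen across them. Since $\Psi,\Phi$ are drawn independently of the stochastic-gradient realizations, one conditions on the gradient path, applies Theorem~\ref{sk2} in that conditional expectation over $\Psi,\Phi$, and then takes the outer expectation; Jensen for $u\mapsto u^{1/\nu}$ is applied to the total (joint) expectation of the nonnegative scalar $f(\mymathcal{A}X_t)-f(\mymathcal{A}X^\star)$, which is legitimate. Everything else is a routine substitution of constants. I would also remark in passing that the $\Expect$ on the left-hand side of the theorem statement is the joint expectation over both sources of randomness, consistent with the reconstruction guarantees stated earlier.
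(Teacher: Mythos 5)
Your proof is correct and uses the same ingredients as the paper (Theorem~\ref{sk2}, the optimality of $[X_t]_r$ given $\rank(X^\star)\leq r$, the triangle inequality, the curvature hypothesis with Jensen for $u\mapsto u^{1/\nu}$, and Theorem~\ref{t1} with $D=2\alpha$). The only cosmetic difference is that you factor out the constant $1+3\sqrt{2}\leq 6$ before applying Jensen, whereas the paper bounds the two terms of the triangle inequality separately and sums; the result is identical.
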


\begin{proof} Let $g = f\circ \mymathcal{A}$.
	The proof of Theorem \ref{t1} tells us that
	\[\Expect (g(X_t)- g(X^\star) )\leq \frac{ LD^2(1+\delta)}{2^{t+1}}.\]
	Since the iterate $X_t$ is  feasible, the assumption in \eqref{st1} gives us
	\begin{align}
	\Expect (g(X_t)- g(X^\star) )  & \geq \kappa \Expect \|X_t - X^\star\|_F^v  \label{b1}\\
	& \geq\kappa \Expect \| X_t - [X_t]_r\|_F^v \nonumber \\
	& \geq \kappa [\Expect( \|X_t - [X_t]_r\|_F)]^v \nonumber \\
	& \geq \frac{\kappa}{(3\sqrt{2})^v} (\Expect(\|X_t -\hat{X}_t\|_F))^v \label{b2}.
	\end{align}
	The second inequality is due to the optimality of $[X_t]_r$ and $X^\star$ has rank less then $r$. The third is because of Jensen's inequality and the last is from Theorem \ref{sk2}.  We now conclude that

	\begin{align*}
	\Expect\|\hat{X}_t - X^\star\|_F  & \leq \Expect \|\hat{X}_t - X_t\| + \Expect\|X_t - X^\star\| \\
	& \leq3\sqrt {2} \Big( \frac{\kappa^{-1} LD^2(1+\delta)}{2^{t+1}} \Big)^{1/v}+ \Big (\frac{ \kappa^{-1} LD^2(1+\delta)}{2^{t+1}} \Big)^{1/v}.
	\end{align*}
The last bound follows from inequality \eqref{b1} and \eqref{b2}. To reach the final conclusion shown in the theorem, simplify the numerical constant,  use the assumption that $v\geq 1$ and note that $D\leq 2\alpha$. 
\end{proof}

\begin{acknowledgement}
This work was supported by DARPA Award FA8750-17-2-0101.
The authors are grateful for helpful discussions with Joel Tropp, Volkan Cevher, and Alp Yurtsever.
\end{acknowledgement}
\section*{Appendix}

We prove the following simple proposition about $L$-smooth functions used in Section \ref{s2}.
\begin{proposition}
	If $f$ is a real valued differentiable convex function with domain $\reals^n$
	and satisfies $\|\nabla f(x)- \nabla f(y)\|\leq L\|x-y\|$, then for all $x,y \in \reals^n$,
	$$ f(x)\leq f(y) + \nabla f(y)^T (x-y)+\frac{L}{2}\|x-y\|^2.$$
\end{proposition}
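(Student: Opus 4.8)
The plan is to prove the standard descent lemma by integrating the gradient along the line segment joining $y$ to $x$; convexity will not actually be needed, only differentiability and the Lipschitz bound on $\nabla f$. First I would introduce the auxiliary scalar function $g(t) = f\bigl(y + t(x-y)\bigr)$ for $t \in [0,1]$, which is differentiable with $g'(t) = \nabla f\bigl(y + t(x-y)\bigr)^T(x-y)$ by the chain rule. By the fundamental theorem of calculus,
\[
f(x) - f(y) = g(1) - g(0) = \int_0^1 \nabla f\bigl(y + t(x-y)\bigr)^T(x-y)\,dt.
\]

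Next I would subtract the linear term $\nabla f(y)^T(x-y) = \int_0^1 \nabla f(y)^T(x-y)\,dt$ from both sides to obtain
\[
f(x) - f(y) - \nabla f(y)^T(x-y) = \int_0^1 \bigl(\nabla f(y + t(x-y)) - \nabla f(y)\bigr)^T(x-y)\,dt.
\]
Then I would bound the integrand using the Cauchy--Schwarz inequality followed by the hypothesis $\|\nabla f(u) - \nabla f(v)\| \leq L\|u-v\|$ applied with $u = y + t(x-y)$ and $v = y$, which gives
\[
\bigl(\nabla f(y + t(x-y)) - \nabla f(y)\bigr)^T(x-y) \leq \|\nabla f(y + t(x-y)) - \nabla f(y)\|\,\|x-y\| \leq tL\|x-y\|^2.
\]

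Finally I would integrate this bound over $t \in [0,1]$, using $\int_0^1 t\,dt = \tfrac12$, to conclude $f(x) - f(y) - \nabla f(y)^T(x-y) \leq \tfrac{L}{2}\|x-y\|^2$, which is exactly the claim. There is no real obstacle here: the only point requiring a word of justification is the applicability of the fundamental theorem of calculus to $g$, which is immediate since $t \mapsto \nabla f(y+t(x-y))$ is continuous (the Lipschitz condition forces $\nabla f$ to be continuous), so $g$ is continuously differentiable on $[0,1]$.
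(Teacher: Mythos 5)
Your proposal is correct and follows essentially the same route as the paper's proof: integrate the gradient along the segment from $y$ to $x$, subtract the linear term, bound the integrand by Cauchy--Schwarz and the Lipschitz hypothesis, and integrate. The only difference is that you spell out the auxiliary function $g$ and the continuity justification for the fundamental theorem of calculus, which the paper leaves implicit.
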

\begin{proof} The inequality follows from the following computation:
	\beq
	\ba{ll}
	f(x) - f(y) -\nabla f(y)^T(x-y)&= \int_0^1  (\nabla f(y+t(x-y))-\nabla f(y))^T(x-y)dt \\
	&\leq  \int_0^1 \|  (\nabla f(y+t(x-y))-\nabla f(y))^T(x-y)\|dt \\
	&\leq \int_0^1 \| (\nabla f(y+t(x-y))-\nabla f(y))\|\|(x-y)\|dt \\
	&\leq  \int_0^1 Lt\|x-y\|^2dt\\
	&=  \frac{L}{2}\|x-y\|^2.
	\ea
	\eeq
\end{proof}
%


\bibliographystyle{abbrv}
\bibliography{references}

\end{document}